\newtheorem{theorem}{Theorem}[section]
\newtheorem{lemma}[theorem]{Lemma}
\newtheorem{proposition}[theorem]{Proposition}
\newtheorem{corollary}[theorem]{Corollary}
\theoremstyle{definition}
\newtheorem{definition}[theorem]{Definition}
\theoremstyle{remark}
\newtheorem{remark}[theorem]{Remark}
\subjclass[2020]{53E10}
\numberwithin{equation}{section}
\begin{document}
\title{  Ancient mean curvature flows from minimal hypersurfaces }

%    Information for first author
\author{Yongheng Han}

\address{School of Mathematical Science, University of Science and Technology of China, No. 96 Jinzhai Road,  Hefei City,  Anhui Province 230026, China.}

\email{hyh2804@mail.ustc.edu.cn}

%%==================================%%
%% Sample for unstructured abstract %%
%%==================================%%

\date{\today}
\begin{abstract}
	For $n\geq 2$, we construct $I$-dimensional family of embedded ancient solutions to mean curvature flow emerging from an unstable minimal hypersurface $\Sigma$ with finite total curvature in $\mathbb{R}^{n+1}$, where $I$ is the Morse index of the Jacobi operator on $\Sigma$.
\end{abstract}

\keywords{mean curvature flow, ancient solution, minimal hypersurface}

\maketitle

\section{Introduction}\label{sec1}

A one-parameter family $M_t$ of hypersurfaces in $\mathbb{R}^{n+1}$ flows by mean curvature if
\begin{equation}
	(\partial_t x)^\perp =\textbf{H},
\end{equation}
where $\textbf{H}=-H\textbf{n}$ is the mean curvature vector, $\textbf{n}$ is the outward unit normal, $v^\perp$ is the normal part of a vector $v$,  and the mean curvature $H$ is given by $H=\text{div}(\textbf{n})$.
An ancient solution to mean curvature flow is a solution defined on a time interval of the form  $(-\infty, C]$ with $-\infty<C\leq +\infty$. Ancient solutions are important for  understanding mean curvature flow near singularities. 
%There are two kinds of ancient solutions, "soliton" and "non-soliton". We call a solution to mean curvature flow a soliton if it evolves by a combination of rigid motions and homotheties. And by "non-soliton", we mean a solution which is not a soliton.

Self-shrinkers, translators, and self-expanders are examples of self-similar solutions; in particular, self-shrinkers and translators provide important classes of ancient solutions.
Nguyen \cite{Ngu09,Ngu10,Ngu14} constructs noncomapct self-shrinkers with any genus (see also \cite{Mol11,KKM18,CS23}).
D\'avila-DePino-Nguyen \cite{DDN17} construct self-translating surfaces in $\mathbb{R}^3$ with three parabolic ends and high genus. Hoffman-Ilmanen-Martin-White \cite{HIMW19} classify complete translating graphs in $\mathbb{R}^3$ and produce $(n-1)$-parameter families of translating graphs in $\mathbb{R}^{n+1}$. Sun-Wang \cite{SW23} construct translators with prescribed ends, while Ilmanen \cite{Ilm98} establishes existence results for self-expanding hypersurfaces with prescribed tangent cones at infinity.

Many non-soliton ancient solutions have also been constructed. Bourni-Langford-Mramor \cite{BLM21} construct closed nonconvex nonsoliton ancient mean curvature flows and Mramor-Payne \cite{MP21} obtain ancient and eternal solutions arising from minimal surfaces. Choi-Mantoulidis \cite{CM22} produce ancient gradient flows for elliptic functionals. Bourni-Langford-Tinaglia \cite{BLT22} show existence of ancient solutions emanating from polytopes. Chodosh-Choi-Mantoulidi-Schulze \cite{CCCS24} construct ancient rescaled mean curvature flows asymptotic to a non-compact self-shrinker as  $t \to -\infty$. 

In this paper, we construct an $I$-parameter family of ancient mean curvature flows emerging from a prescribed unstable minimal hypersurface in $\mathbb{R}^{n+1}$ for $n\geq 2$.  Our approach is motivated by the work of Chodosh-Choi-Mantoulidi-Schulze \cite{CCCS24} and may be regarded as a generalization of the construction of Mramor-Payne.

\begin{theorem}\label{thm1}
	Suppose that $\Sigma^n\subset \mathbb{R}^{n+1}$ is an unstable minimal hypersurface with finite total curvature. Let $I$ be the Morse index of $\Sigma$, then there exists $I$-dimensional family of ancient mean curvature flows asymptotic to $\Sigma$ as $t\to -\infty$.
\end{theorem}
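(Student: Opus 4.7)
I would parametrize nearby hypersurfaces as normal graphs $M_t = \{x + u(x,t)\mathbf{n}(x) : x \in \Sigma\}$ for small scalar functions $u$. The MCF equation becomes a quasilinear parabolic PDE
\begin{equation*}
\partial_t u = L u + Q(u, \nabla u, \nabla^2 u), \qquad L = \Delta_\Sigma + |A|^2,
\end{equation*}
where $L$ is the Jacobi operator of $\Sigma$ and $Q$ is a smooth nonlinearity vanishing to second order at the origin. Because $\Sigma$ has finite total curvature, $|A|^2$ decays at infinity, so $L$ is a relatively compact perturbation of $\Delta_\Sigma$: the essential spectrum lies in $[0,\infty)$ and the negative part of the spectrum is a finite, discrete set. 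The Morse-index hypothesis supplies exactly $I$ positive eigenvalues $\lambda_1 \ge \cdots \ge \lambda_I > 0$ of $L$ with rapidly decaying $L^2$-eigenfunctions $\phi_1,\ldots,\phi_I$.

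\textbf{Lyapunov--Perron scheme.} The plan is an unstable-manifold argument adapted to this geometric PDE, in the spirit of Chodosh--Choi--Mantoulidis--Schulze \cite{15} and generalizing Mramor--Payne \cite{12} to arbitrary Morse index. For each parameter $\mathbf{c} = (c_1,\ldots,c_I) \in \mathbb{R}^I$ of small norm I would seek an ancient $u_\mathbf{c}$ whose unstable projection is asymptotic to $\sum_i c_i e^{\lambda_i t}\phi_i$ as $t \to -\infty$. Using Duhamel's formula, I split $u$ along the unstable eigenspaces and their $L^2$-orthogonal complement (projection $P^\perp$), and recast the problem as a fixed-point equation of the schematic form
\begin{equation*}
u(t) \;=\; \sum_{i=1}^I c_i e^{\lambda_i t}\phi_i \;+\; \Phi[u](t),
\end{equation*}
where $\Phi[u](t)$ integrates $Q(u)$ against the finite-dimensional unstable semigroup (with the convergent direction $s \in (-\infty, t)$) and against the stable/neutral semigroup $e^{(t-s)L}P^\perp$.

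\textbf{Contraction in a weighted space.} Next, work in a Banach space of functions $u : \Sigma \times (-\infty, T] \to \mathbb{R}$ with a time weight $e^{-\gamma t}$ for some $\gamma \in (0, \lambda_I)$ combined with a spatial weighted parabolic $C^{2,\alpha}$ norm chosen to respect both the rapid decay of the $\phi_i$ and the asymptotically planar geometry of the ends of $\Sigma$. The quadratic structure of $Q$ gives $\|\Phi[u]\| \lesssim \|u\|^2$ in this norm, so for $|\mathbf{c}|$ sufficiently small $\Phi$ is a contraction on a small ball and produces a unique fixed point $u_\mathbf{c}$ depending smoothly on $\mathbf{c}$. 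By construction $u_\mathbf{c} \to 0$ as $t \to -\infty$, yielding the asymptotic condition. Embeddedness of $M_t^\mathbf{c}$ for small $\mathbf{c}$ is automatic from $C^{2,\alpha}$ closeness to the embedded $\Sigma$.

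\textbf{Main obstacle and conclusion.} The chief technical difficulty is obtaining sharp estimates for the stable/neutral semigroup $e^{tL}P^\perp$: since the essential spectrum of $L$ reaches $0$, its decay is at best polynomial, so one cannot simply absorb it through the exponential time weight. The finite-total-curvature hypothesis is the key geometric input, providing, via Schr\"odinger-operator theory for decaying potentials on manifolds with asymptotically Euclidean ends, enough resolvent and heat-kernel control to close the estimate. A further subtlety is the possible kernel of $L$ from translational Jacobi fields; these are handled either by working modulo the ambient translation symmetry or by projecting them out within the spatial weighted norm. Finally, $D_\mathbf{c} u_\mathbf{c}\bigl|_{\mathbf{c}=0}$ sends $\mathbf{e}_i$ to $e^{\lambda_i t}\phi_i$, and these are linearly independent, so $\mathbf{c} \mapsto M_t^\mathbf{c}$ is an immersion at the origin and produces the desired genuine $I$-dimensional family of distinct ancient flows asymptotic to $\Sigma$.
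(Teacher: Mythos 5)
Your proposal matches the paper's strategy closely: parametrize as normal graphs over $\Sigma$, isolate the finitely many negative eigenvalues of $-L$ (present because finite total curvature keeps the essential spectrum in $[0,\infty)$ while Li--Yau bounds the index), prove a weighted-$C^{2,\alpha}$ estimate for $\partial_t - L$ via a Gaussian-type heat-kernel bound for the nonnegative spectral part, and run a contraction/Lyapunov--Perron argument in an $e^{-\delta_0 t}$-weighted parabolic H\"older space with $\delta_0 \in (0,-\lambda_I)$. The one place you are more cautious than necessary is the kernel from translational Jacobi fields: on a finite-total-curvature end the normal components $\langle \nu, e_i\rangle$ tend to nonzero constants and hence are not $L^2$, so they do not contribute a zero eigenvalue, and the paper's uniform estimate on the entire nonnegative spectral projection (Proposition \ref{prop2.8}) already subsumes them without any extra modding-out.
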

\begin{remark}
	Recently \cite{CHL25}, Choi-Huang-Lee give a new proof Theorem \ref{thm1}. Moreover, they get better asymptotic behavior of the ancient solutions. In \cite{Mar24}, Mramor partially classifies the space of eternal mean convex flows in $\mathbb{R}^3$ of finite total curvature type.
\end{remark}

The paper is organized as follows. We first analyze the linearized graphical mean curvature flow equation over an unstable minimal hypersurface. The key difficulty to analysis the equation is that the spectrum of the linearized operator is not discrete. We use the heat kernel estimate to get a weighted $C^0$ estimate. Then, we can easily obtain a weighted  $C^{2,\alpha}$ estimate. We use the weighted Schauder estimates for the linearized parabolic equation to construct mean curvature flows by the contraction principle. The ancient mean curvature flows in Theorem  \ref{thm1} converge to the minimal hypersurface exponentially as $t\to -\infty$.

\noindent
\textbf{Acknowledgment:}
The author is grateful to his advisor, Bing Wang, for his guidance and support. The author is supported by the Project of Stable Support for Youth Team in Basic Research Field,  Chinese Academy of Sciences (YSBR-001). The authors would like to thank the anonymous reviewers for their insightful comments and constructive suggestions, which have significantly improved the quality of this paper.

\section{Proof of Theorem \ref{thm1}}
\subsection{Spectral theory in $L^2$ space} Let $\Sigma\subset \mathbb{R}^{n+1}$ be a smooth minimal hypersurface. We consider the following operator on $\Sigma$:
\begin{equation}
	\begin{split}
		Lu:=\Delta_\Sigma u+|A_\Sigma|^2u.
	\end{split}
\end{equation}
This is the stability operator for area functional in the sense that
\begin{equation}
	\begin{split}
		\frac{d^2}{ds^2}\bigg|_{s=0}\text{Area}(\text{graph}_\Sigma(su) )=-\int_\Sigma u(Lu)d\mathcal{H}^n.
	\end{split}
\end{equation}
We define a $L^2$ product for measurable functions $u,v:\Sigma\to \mathbb{R}:$
\begin{equation}
	\begin{split}
		\langle u,v\rangle:=\int_\Sigma uvd\mathcal{H}^n.
	\end{split}
\end{equation}
This induces a metric$\|\cdot\|$ and a Hilbert space
\begin{equation}
	\begin{split}
		L^2(\Sigma):=\{u:\Sigma\to \mathbb{R}:\|u\|<+\infty\}.
	\end{split}
\end{equation}
Likewise, we define the higher order Sobolev spaces
\begin{equation}
	\begin{split}
		H^k(\Sigma):=\{u:\Sigma\to \mathbb{R}:\|u\|+\|\nabla_\Sigma u\|+\cdots \|\nabla_\Sigma ^k u\|<+\infty\}.
	\end{split}
\end{equation}
It is with respect to these measures spaces that $L$ is self-adjoint, i.e.,
\begin{equation}
	\begin{split}
		\langle Lu,v\rangle=\langle u,Lv\rangle\;\forall u,v\in H^2(\Sigma).
	\end{split}
\end{equation}
We say $\Sigma$ has finite total curvature if $\int_{ \Sigma}|A|^nd\mu<+\infty$.
\begin{definition}
	The Morse index of a compact minimal hypersurface $\Sigma$ is the number of negative eigenvalues of the stability operator $L$ (counting the multiplicity) acting on the space of smooth functions which vanish on the boundary.
	
	Suppose $\Sigma$ is a noncompact minimal hypersurface. $\Sigma_i\subset \Sigma$ is compact with smooth boundary which satisfies $\Sigma_i\subset\Sigma_{i+1}$ and $\Sigma=\cup_i\Sigma_i$. If $\lim_{i\to \infty}index(\Sigma_i)<+\infty$, we say $\Sigma$ has finite index and the index of $\Sigma$ is $\lim_{i\to \infty}index(\Sigma_i)$. $\Sigma$ is called stable if $index(\Sigma)=0$, otherwise unstable. 
\end{definition}
\begin{lemma}\label{lm2}
	Suppose that $\Sigma^n\in \mathbb{R}^{n+1}$ is an unstable minimal hypersurface with finite total curvature, there exists $I\in \mathbb{N}^+$ and real numbers $\lambda_1< \lambda_2\leq \cdots\leq \lambda_{I}<0$, such that $\sigma(-L)=\{\lambda_1, \lambda_2, \cdots,\lambda_{I}\}\cup [0,+\infty)$. Also there is a  corresponding 
	$L^2$-orthonormal basis $\phi_1,\cdots,\phi_I$ such that $L\phi_i+\lambda_i\phi_i=0$. Moreover, $\Sigma$ has finite end.
\end{lemma}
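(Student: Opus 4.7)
The plan is to identify $-L$ as a self-adjoint operator on $L^2(\Sigma)$ whose essential spectrum equals $[0,+\infty)$ and whose negative spectrum is purely discrete of total multiplicity exactly $I$. The key analytic input is the decay of $|A_\Sigma|^2$ at infinity; the key abstract input is Weyl's theorem on invariance of essential spectrum under relatively compact perturbations, together with the Courant--Fischer min--max characterization of eigenvalues.

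First, I would establish that $|A_\Sigma|(x)\to 0$ as $|x|\to\infty$ along $\Sigma$. Finite total curvature of a complete minimal hypersurface in $\mathbb{R}^{n+1}$ implies, via the classical asymptotic regularity results of Anderson and Schoen, that $\Sigma$ has finitely many ends, each asymptotic at polynomial rate to a hyperplane; in particular $\Sigma$ has bounded geometry, is asymptotically Euclidean, and $|A_\Sigma|\to 0$ at infinity. Since $L$ is symmetric on $C_c^\infty(\Sigma)$ with $|A_\Sigma|^2$ bounded, standard theory endows $-L$ with a Friedrichs self-adjoint extension on $L^2(\Sigma)$.

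Second, I would compute $\sigma_{\mathrm{ess}}(-L)$. Because each end of $\Sigma$ is asymptotic to a flat hyperplane, testing with suitably cut-off plane waves supported deep in a single end produces a Weyl sequence at every $\lambda\geq 0$, so $[0,+\infty)\subset\sigma_{\mathrm{ess}}(-\Delta_\Sigma)$; the reverse inclusion follows from $-\Delta_\Sigma\geq 0$. The multiplication operator by $|A_\Sigma|^2$ is then relatively $(-\Delta_\Sigma+1)$-compact, by combining the Rellich compactness $H^1(K)\hookrightarrow L^2(K)$ on any compact $K\subset\Sigma$ with the uniform smallness of $|A_\Sigma|^2$ outside a sufficiently large ball. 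Hence Weyl's theorem yields $\sigma_{\mathrm{ess}}(-L)=\sigma_{\mathrm{ess}}(-\Delta_\Sigma)=[0,+\infty)$.

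Third, the portion of $\sigma(-L)$ lying in $(-\infty,0)$ is therefore purely discrete with eigenvalues of finite multiplicity that can accumulate only at $0$. The count of these eigenvalues coincides with the Morse index $I$: on an exhausting family $\Sigma_i$ of precompact smooth domains, the Dirichlet spectrum of $-L$ on $\Sigma_i$ has precisely $\mathrm{index}(\Sigma_i)$ negative eigenvalues, and domain monotonicity together with the assumption $\lim_i\mathrm{index}(\Sigma_i)=I$ and the min--max principle on $\Sigma$ itself produces exactly $I$ negative eigenvalues $\lambda_1\leq\cdots\leq\lambda_I<0$. The spectral theorem then delivers an $L^2$-orthonormal family $\phi_1,\dots,\phi_I$ with $L\phi_i+\lambda_i\phi_i=0$. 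The strict inequality $\lambda_1<\lambda_2$ follows from a standard strong-maximum-principle argument: any Rayleigh-quotient minimizer can be replaced by its absolute value without increase, hence is nonnegative, and Harnack then forces positivity, so the ground state is one-dimensional.

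The main obstacle I expect is the two-part verification that $|A_\Sigma|^2$ is a relatively compact perturbation of $-\Delta_\Sigma$ and that $\sigma_{\mathrm{ess}}(-\Delta_\Sigma)=[0,+\infty)$; both rest on the asymptotic flatness of the ends and on a careful choice of cutoffs to localize Weyl sequences in a single end without interaction. Once those two facts are in place, the remainder of the argument is routine self-adjoint spectral theory, elliptic regularity (to upgrade weak eigenfunctions to smooth ones), and the min--max principle.
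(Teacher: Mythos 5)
Your proof follows the same overall strategy as the paper's (show $\sigma_{\mathrm{ess}}(-L)=[0,+\infty)$, then show the negative spectrum is discrete with total multiplicity $I$), but you supply the correct justification for the crucial essential-spectrum step, where the paper's stated reasoning is actually inadequate. The paper cites Lu--Zhou for $\sigma_{\mathrm{ess}}(-\Delta_\Sigma)=[0,+\infty)$ and then asserts that because ``$|A|^2$ is a bounded operator'' the essential spectra of $-\Delta_\Sigma$ and $-\Delta_\Sigma-|A|^2$ coincide. That implication is false as stated: a merely bounded perturbation can shift the essential spectrum (e.g.\ adding a nonzero constant). What is true, and what you correctly isolate, is that $|A_\Sigma|^2\to 0$ at infinity on a finite-total-curvature minimal hypersurface (Schoen's end analysis), and multiplication by a function vanishing at infinity is $(-\Delta_\Sigma+1)$-compact; Weyl's theorem on relatively compact perturbations then gives invariance of the essential spectrum. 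Your Weyl-sequence construction on a planar end is also a self-contained replacement for the Lu--Zhou citation. For the negative spectrum the two proofs diverge in how they secure finiteness: the paper invokes the Li--Yau (CLR-type) bound $\#\{\lambda_i<0\}\leq c(n)\int_\Sigma|A|^n$, while you use the exhaustion definition of Morse index, domain monotonicity, and min--max to identify the count with $I$ directly; both are valid, though the min--max route more cleanly produces the exact count $I$ rather than just finiteness. You also address the strict gap $\lambda_1<\lambda_2$ via the ground-state positivity argument, which the paper states in the lemma but does not prove. In short, your route is parallel in outline but repairs a genuine gap in the essential-spectrum step and is more self-contained in the eigenvalue count.
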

\begin{proof}
	Li-Yau \cite{LY83} show that the number of nonnegative eigenvalues of $-L$ is bounded by $c(n)\int_{ \Sigma}|A|^nd\mu$. So, index$(\Sigma)<+\infty$. Since the spectrum of $-\Delta$ is $[0,+\infty)$(cf.\cite{Sch83}) and  $|A|^2$ is a bounded operator, the essential spectrum of $-\Delta-|A|^2$ and $-\Delta$ are the same. 
	
	In \cite{Li17}, Li proves that
	\begin{equation}
		\frac{2}{n(n+1)}(\#ends +b_1(\Sigma)-1)\leq index (\Sigma)+nullity(\Sigma)\leq C\int_\Sigma |A|^n,
	\end{equation}
	where $nullity(\Sigma)$ is the dimension of the space of $L^2$ solutions of the Jacobi operator,
	and $b_1(M)$ is the first Betti number of the compactification of $\Sigma$.
\end{proof}

We next show that eigenfunctions corresponding to negative eigenvalues must decay exponentially.
\begin{lemma}\label{lm2-3}
	The same assumption  as in Lemma \ref{lm2}. Since $\phi_1$ does not change sign, we assume that $\phi_1>0$. For any $\varepsilon\in (0,\frac12)$, there is a constant $C=C(\Sigma,\varepsilon)>0$ such that  
	\begin{equation}
		0<\phi_1(x)\leq Ce^{-\sqrt{-(1-\varepsilon)\lambda_1} |x|}.
	\end{equation}
	Moreover, 
	\begin{equation}\label{req10}
		|\phi_i|\leq Ce^{-\sqrt{-(1-\varepsilon)\lambda_i} |x|}.
	\end{equation}
\end{lemma}
\begin{proof}
	For any positive integer $j$, denote
	\begin{equation}
		L_j= (\Delta +|A|^2)|_{B_j(p)\cap \Sigma}
	\end{equation}
	where $p\in \Sigma$ is a fixed point.
	Let $\lambda_{j,i}$ and $\phi_{j,i}$ be the eigenvalues and eigenfunctions  of $L_j$ with Dirichlet boundary conditions. 
	For $i\leq I$, we can assume 
	\begin{equation}\label{eq2.11}
		\lim\limits_{j\to \infty} \lambda_{j,i}\to\lambda_i \text{ and } \phi_{j,i}\overset{C_{loc}^\infty}{\longrightarrow} \phi_i.
	\end{equation}
	Since $|A|\to 0$ as $|x|\to\infty$, for $x$ sufficiently large we have
	\begin{equation}\label{eq2.12}
		\Delta \phi_{j,1}(x)=(-\lambda_{j,1}-|A|^2)\phi_{j,1}(x)\geq -(1-\tfrac{1}{3}\varepsilon )\lambda_{j,1}\phi_{j,1}(x).
	\end{equation}
We assume that $0 \notin M$ and let $|x|$ denote the extrinsic distance. By direct calculation, we have
	\begin{equation}\label{eq2.13}
		\nabla|x|=\frac{1}{|x|}\nabla(\tfrac12 |x|^2)=\frac{x^T}{|x|}
	\end{equation}
	and
	\begin{equation}\label{eq2.14}
		\begin{split}
			\Delta |x|=\frac{1}{|x|}\Delta (\tfrac12 |x|^2)-\frac{|\nabla(\frac12|x|^2)|^2}{|x|^3}=\frac{n}{|x|}-\frac{|x^T|^2}{|x|^3}
		\end{split}
	\end{equation}
	where $x^T$ denotes the orthogonal projection of $x$ onto the tangent space $T_x\Sigma$.
Since the tangent cone of $\Sigma$ at infinity consists of planes (counted with multiplicity), it follows that
	\begin{equation}\label{eq2.15}
		\lim\limits_{|x|\to  \infty}|\nabla |x||^2=\lim\limits_{|x|\to  \infty}\frac{|x^T|^2}{|x|^2}=1,\quad  \lim\limits_{|x|\to  \infty}\Delta  |x|=0.
	\end{equation}
	Define the auxiliary function $f$ by
	\begin{equation}
		f=e^{-\sqrt{-(1-\varepsilon)\lambda_1} |x|}.
	\end{equation}
	Then, we have 
	\begin{equation}
		\begin{split}
			\Delta f=f (-(1-\varepsilon)\lambda_1 |\nabla |x||^2- \sqrt{-(1-\varepsilon)\lambda_1}\Delta |x|).
		\end{split}
	\end{equation}
	Combining \eqref{eq2.11}, \eqref{eq2.12} and \eqref{eq2.15}, we can choose $R_0>0,j_0\in \mathbb{N}$ such that for $|x|\geq R_0,j>j_0$:
	\begin{equation}\label{eq2.18}
		\Delta f\leq -(1-\frac{2}{3}\varepsilon)\lambda_1f,\quad  \Delta \phi_{j,1}\geq  -(1-\frac{1}{2}\varepsilon )\lambda_1\phi_{j,1} .
	\end{equation}
	Suppose $\frac{\phi_{j,1}}{f}$ achieves its positive maximum at some $x_0\in x\in B_{j}\setminus B_{R_0}$. Then
	\begin{equation}
		\nabla \frac{\phi_{j,1}}{f}=0,\quad \Delta \frac{\phi_{j,1}}{f}\leq 0.
	\end{equation}
	However, we compute
	\begin{equation}
		\begin{split}
			\Delta\left(\frac{\phi_{j,1}}{f}\right)=\frac{f\Delta\phi_{j,1}-\phi_{j,1}\Delta f}{f^2}-2\frac{1}{f}\langle \nabla f,\nabla \frac{\phi_{j,1}}{f}\rangle =\frac{f\Delta\phi_{j,1}-\phi_{j,1}\Delta f}{f^2}.
		\end{split}
	\end{equation}
By \eqref{eq2.18}, it follows that
	\begin{equation}
		f\Delta\phi_{j,1}-\phi_{j,1}\Delta f \geq -\frac{1}{6}\lambda_1\varepsilon\phi_{j,1}f>0,
	\end{equation}
	which contradicts $\Delta (\phi_{j,1}/f)\leq 0$.
	Hence, the maximum of $\phi_{j,1}/f$ must occur inside $B_{R_0}$:
	\begin{equation}
		\sup\limits_{x\in B_{j}}\frac{\phi_{j,1}}{f} \leq  \sup\limits_{x\in B_{R_0}}\frac{\phi_{j,1}}{f}
	\end{equation}
for any $j\geq j_0$.	Passing to the limit as $j\to \infty$ gives the exponential decay estimate:
	\begin{equation}
		\phi_1\leq  Cf\leq Ce^{-\sqrt{-(1-\varepsilon)\lambda_1} |x|} \;\mathrm{for}\;x\in \Sigma\setminus B_{R_0}.
	\end{equation}
	A similar argument applies to $\phi_{j,i}^+=\mathrm{max}\{\phi_{j,i},0\}$ and $\phi_{j,i}^-=\mathrm{max}\{-\phi_{j,i},0\}$ which proves \eqref{req10}.
\end{proof}

\subsection{Weighted H\"older space}

Let $(\Sigma,g)$ be a noncompact Riemannian manifold. Define $\rho(x)=(1+d(x,p)^2)^{1/2}$ where $p\in \Sigma$ is a fixed point and $\delta(g)$ is the injectivity radius. We introduce the weight H\"older spaces.

\begin{definition}\label{w-holder}(weight H\"older spaces)
	For $\beta \in\mathbb{R}$ and $k$ a nonnegative integer define $C^k_{\beta}$ to be the space of continuous function $f$ on $\Sigma\times\mathbb{R}$ with $k$ continuous derivatives. Define  the norm $\|\cdot\|_{C^k_{\beta}}$ on $C^k_{\beta}(\Sigma\times \mathbb{R})$ by
	\begin{equation}
		\begin{split}
			\|f\|_{C^k_{\beta}}=\sum_{2i+|l|\leq k}\sup\limits_{(x,t)\in \Sigma\times\mathbb{R}} \rho ^{2i+|l|+\beta}|D_t^iD^l f|(x,t).
		\end{split}
	\end{equation}
	where $l$ is a multi-index. For $T$ a tensor field on $\Sigma\times\mathbb{R}$ and $\alpha\in (0,1) ,\beta \in \mathbb{R}$, define
	\begin{equation*}
		\begin{split}
			[T]_{\alpha,\beta}(x,t)=\sup\limits_{(x,t)\neq (y,s) ,d(x,y)\leq \delta(g)} \left[|(\rho(x)+\rho(y)+|s-t|^{1/2})^{\beta} \frac{|T(x,s)-T(y,t)|}{d(x,y)^{\alpha}+|s-t|^{\alpha/2}} \right]
		\end{split}
	\end{equation*}
	and 
	\begin{equation*}
		\begin{split}
			[T]_{\alpha,\beta}=\sup\limits_{(x,t)\in \Sigma \times \mathbb{R}}[T]_{\alpha,\beta}(x,t).
		\end{split}
	\end{equation*}
	For $\beta\in \mathbb{R}$, $k$ an nonnegative integer, and $\alpha\in (0,1)$, define the weighted H\"older space $C^{k,\alpha}_{\beta}$ to be the set of $f\in C^k_\beta(\Sigma\times \mathbb{R})$ for which the norm
	\begin{equation}
		\|f\|_{C^{k,\alpha}_{\beta}}=\|f\|_{C^k_{\beta}}+\sup\limits_{2i+|l|=k}[D^i_tD^l f]_{\alpha,\beta+k+\alpha}
	\end{equation}
	is finite.
	
	For any fixed $t$, we define
	\begin{equation}
		\begin{split}
			\|f(\cdot,t)\|_{C^{k,\alpha}_{\beta}}&=\sum_{2i+|l|\leq k}\sup\limits_{x\in \Sigma} \rho ^{2i+|l|+\beta}|D_t^iD^l f|(x,t)\\
			&+\sum_{2i+|l|= k}\sup\limits_{x\in \Sigma}[D^i_tD^l f]_{\alpha,\beta+k+\alpha}(x,t).
		\end{split}
	\end{equation}
\end{definition}
We obtain the following weighted Schauder estimate on minimal hypersurface $\Sigma$. Noting that there is $C>0$ such that
\begin{equation}
	\begin{split}
		C^{-1} (1+|x|)\leq \rho(x)\leq C (1+|x|).
	\end{split}
\end{equation} 
In other word, $\rho(x)$ is comparable to $1+|x|$, and we write $\rho (x)\sim 1+|x|$.

% weighted Schuader estimate Lemma
\begin{lemma}\label{WS}
	Suppose $u,h$ satisfy the following inhomogeneous linear parabolic equation 
	\begin{equation}
		(\partial_t-\Delta-|A|^2)u=h
	\end{equation}
	and $\sup\|u(\cdot, \sigma)\|_{C^{0}_\beta},\sup\|h(\cdot, \sigma)\|_{C^{0,\alpha}_{\beta+2}}<\infty$, for some $\beta\in\mathbb{R},\alpha\in (0,1)$. Then,
	\begin{equation}
		\|u(\cdot, \tau)\|_{C^{2,\alpha}_\beta} \leq C \sup _{\sigma \leq \tau}\left(\|u(\cdot, \sigma)\|_{C^{0}_\beta}+\|h(\cdot, \sigma)\|_{C^{0,\alpha}_{\beta+2}}\right).
	\end{equation}
	
\end{lemma}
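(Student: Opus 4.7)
The plan is to prove this weighted parabolic Schauder estimate by a pointwise rescaling argument that reduces it to the classical unweighted interior parabolic Schauder estimate. The weights $\rho^{2i+j+\beta}$ in Definition \ref{w-holder} are tailored so that, upon rescaling space by a factor $r(x_0)$ and time by $r(x_0)^2$ around a base point $x_0 \in \Sigma$, a function in $C^{k,\alpha}_\beta$ becomes one with uniformly bounded $C^{k,\alpha}$ norm on a unit parabolic cylinder in the rescaled metric. This reduction works provided two scale-invariant facts hold: the rescaled ambient geometry is uniformly controlled, and the zeroth-order coefficient $|A|^2$ carries an extra weight of $r^{-2}$.

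The first step is to record the following geometric consequence of finite total curvature: the second fundamental form obeys $|\nabla^k A|(x) \leq C_k \, r(x)^{-1-k}$, and dilating $B_{r(x_0)/2}(x_0) \subset \Sigma$ by $r(x_0)^{-1}$ produces a unit ball with geometry bounded uniformly in $x_0$ (each end being asymptotic to a hyperplane or regular minimal cone). Consequently, $r(x_0)^2 |A|^2$ and its $C^{0,\alpha}$ seminorm at scale $r(x_0)$ are uniformly bounded.

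Next, fix $(x_0, \tau)$, set $r = r(x_0)$, and define on the unit parabolic cylinder $Q_1 = B_1(0) \times [-1, 0]$
\begin{equation*}
\tilde u(y, s) = r^{\beta}\, u(\psi(y), \tau + r^2 s), \qquad \tilde h(y, s) = r^{\beta+2}\, h(\psi(y), \tau + r^2 s),
\end{equation*}
where $\psi$ is the restriction of the $r$-rescaled exponential map at $x_0$. A direct computation shows $\tilde u$ satisfies
\begin{equation*}
(\partial_s - \Delta_{\tilde g} - r^2 |A|^2)\,\tilde u = \tilde h
\end{equation*}
in the rescaled metric $\tilde g$, whose coefficients are uniformly controlled in $C^{0,\alpha}(Q_1)$ by the previous step. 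By the definition of the weighted norms, $\|\tilde u\|_{C^{0,\alpha}(Q_1)}$ and $\|\tilde h\|_{C^{0,\alpha}(Q_1)}$ are each dominated by the right-hand side of the claimed inequality. The classical interior parabolic Schauder estimate on $Q_{1/2} \subset Q_1$ then gives a scale-independent bound on $\|\tilde u\|_{C^{2,\alpha}(Q_{1/2})}$. Unwinding the rescaling, and noting that $\partial_t$ and $\nabla^2$ each carry a factor of $r^{-2}$ matching the weight $\rho^{2i+j+\beta}$, yields the pointwise bounds at $(x_0, \tau)$ that together constitute $\|u(\cdot,\tau)\|_{C^{2,\alpha}_\beta}$; taking the supremum over $x_0$ completes the argument.

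The main obstacle is the geometric step: one must verify that the constant in the standard parabolic Schauder estimate can be chosen \emph{independently} of $x_0$, i.e.\ that the rescaled metrics have uniformly bounded geometry on unit balls. This is precisely where finite total curvature enters, as it rules out curvature concentration along the ends. A secondary, more routine piece of bookkeeping is translating the manifold Hölder seminorms of Definition \ref{w-holder} (which use $d_\Sigma(x,y)$ and $|s-t|^{1/2}$) into their Euclidean counterparts on the rescaled cylinder and back.
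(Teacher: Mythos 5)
Your proposal matches the paper's proof in essence: both treat the compact core by standard local estimates and handle the ends via a scaling argument, exploiting the decay $|\nabla^k A|\lesssim r^{-1-k}$ together with the uniformly bounded rescaled geometry (via Schoen's graphical description of finite-total-curvature ends) to reduce to the classical interior parabolic Schauder estimate and then unwind. The only difference is bookkeeping: the paper parametrizes each end as a graph over a fixed plane and rescales annuli $B_{4r_0}\setminus B_{r_0}$ about the origin, while you rescale balls about each base point $x_0$ via the exponential map; these are equivalent.
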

%proof of weighted estimate
\begin{proof}
	In a bounded domain, standard local estimates apply to show that 
	\begin{equation}
		\|u(\cdot, \tau)\|_{C^{2,\alpha}_\beta} \leq C \sup _{\sigma \leq \tau}\left(\|u(\cdot, \sigma)\|_{C^{0}_\beta}+\|h(\cdot, \sigma)\|_{C^{0,\alpha}_{\beta+2}}\right).
	\end{equation}
	
	In what remains, we shall apply a scaling argument. Taking $R$ large enough, we can write $\Sigma\cap B_{R}^c$ as graphs of a fixed plane(cf.\cite{Sch83}). In each end of $\Sigma$, we have 
	\begin{equation}
		\begin{split}
			\partial_tu&=\frac{1}{\sqrt{\mathrm{det}g}}\partial_i(\sqrt{\mathrm{det}g}g^{ij}\partial_j u)+|A|^2u+h\\
			&=g^{ij}\partial_{ij} u+\partial_i(\sqrt{\mathrm{det}g}g^{ij})\partial_j u+|A|^2u+h,
		\end{split}
	\end{equation}
	where $\{\partial_j\}_{j=1}^n$ is an orthogonal basis of $\mathbb{R}^{n}$.
	Since each end of $\Sigma$ is of finite total curvature, Schoen \cite{Sch83} shows that $|\nabla^l \partial_i(\sqrt{\mathrm{det}g}g^{ij})|\leq C_l\frac{1}{1+|x|^l}$
	when $|x|\geq R_0$. 
	
	For	any  $r_0\geq R_0$, define the rescaled function
	\begin{equation}
		w\doteqdot r^\beta_0u(r_0x,t+t_0+\tau r_0^2).
	\end{equation}
	Then 
	\begin{equation}
		\partial_{ij} w=r_0^{\beta+2} \partial_{ij} u,\quad \partial_{i} w=r_0^{\beta+1} \partial_{i} u,
	\end{equation}
	$w$ solves the equation
	\begin{equation}
		\partial_{\tau}w=g^{ij}\partial_{ij} w+r_0\partial_i(\sqrt{\mathrm{det}g}g^{ij})\partial_j w +r_0^2|A|^2w+r_0^{\beta+2}h.
	\end{equation}
	Noting that $|\nabla^jA|\leq \frac{C}{1+|x|^{1+j}}$ and $ |\partial_i(\sqrt{\mathrm{det}g}g^{ij})|\leq \frac{C}{1+|x|}$. For $ (x,\tau)\in (B_{4}(0)\setminus B_1(0))\times[-2,0]$,
	interior estimates for parabolic equations then imply that for some
	constant $C = C(n, R_0, \alpha,\beta)$
	\begin{equation}
		\begin{split}
			&\|w\|_{C^{2,\alpha}( (B_{3}(0)\setminus B_2(0))\times[-1,0])}\\
			&\leq C\|w\|_{C^{0}( (B_{4}(0)\setminus B_1(0))\times[-2,0])}\\
			&+Cr_0^{\beta+2}\|h\|_{C^{0,\alpha}( (B_{4}(0)\setminus B_1(0))\times[-2,0])}.
		\end{split}
	\end{equation}
	Which, in turn, implies that
	\begin{equation}
		\begin{split}
			&\|u\|_{C^{2,\alpha}_\beta( (B_{3r_0}(0)\setminus B_{2r_0}(0))\times[t_0-r_0^2,t_0])}\\
			&\leq C\|u\|_{C^{0}_\beta( (B_{4r_0}(0)\setminus B_{r_0}(0))\times[t_0-2r_0^2,t_0])}\\
			&+C\|h\|_{C^{0,\alpha}_{\beta+2}( (B_{4r_0}(0)\setminus B_{r_0}(0))\times[t_0-2r_0^2,t_0])}.
		\end{split}
	\end{equation}
	By the arbitrariness of $r_0,t_0$,  
	\begin{equation}
		\|u(\cdot, \tau)\|_{C^{2,\alpha}_\beta} \leq C \sup _{\sigma \leq \tau}\left(\|u(\cdot, \sigma)\|_{C^{0}_\beta}+\|h(\cdot, \sigma)\|_{C^{0,\alpha}_{\beta+2}}\right).
	\end{equation}
\end{proof}

\subsection{Heat kernel estimate}
In this subsection, we establish global bounds for the heat kernel 
$G$ of the Schrödinger operator $\Delta+|A|^2$. Although Zhang \cite{Zha01} obtained both upper and lower bounds for $G$, his estimates cannot be applied directly to our setting. We therefore refine and improve these bounds. We begin by recalling a well-known result.
\begin{proposition}\cite{Dav}
	Suppose that $\Sigma$ is a minimal hypersurface with finite total curvature, then there exists a heat kernel $G(x,y,t)\in C^{\infty}(\Sigma\times \Sigma\times\mathbb{R}^+)$ such that $(e^{L t}f)(x)=\int_{\Sigma}G(x,y,t)f(y)d\mu,\;\forall f\in L^2(\Sigma)$ and
	\begin{equation*}
		\begin{split}
			&	(1)\quad G(x,y,t)=G(y,x,t),\\
			&(2)\quad \lim\limits_{t\to 0^+}G(y,x,t)=\delta_x(y),\\
			&(3)\quad (L-\frac{\partial}{\partial t})G=0,\\
			&(4)\quad G(x,y,t)=\int_{\Sigma}G(x,z,t-s)G(z,y,s)dz.
		\end{split}
	\end{equation*}
\end{proposition}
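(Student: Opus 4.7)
The plan is to construct $G$ by exhausting $\Sigma$ by compact domains, building Dirichlet heat kernels on each, and passing to a monotone limit. The key observation is that finite total curvature together with Schoen's asymptotic structure result (cf.~\cite{17}) forces $|A|^2\in L^\infty(\Sigma)$, so $L=\Delta+|A|^2$ is a bounded perturbation of $\Delta$. Moreover, the spectral information derived earlier shows $L$ is self-adjoint with spectrum bounded above by $-\lambda_1$, so the semigroup $e^{tL}$ is well-defined on $L^2(\Sigma)$ by functional calculus; the task is to realize this semigroup as integration against a smooth kernel.

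First I would pick an exhaustion $\Sigma_1\subset\Sigma_2\subset\cdots$ by precompact domains with smooth boundary such that $\bigcup_i\Sigma_i=\Sigma$. On each $\Sigma_i$ the classical theory on compact manifolds with boundary produces a smooth Dirichlet heat kernel $G_i(x,y,t)$ for $L$, which already satisfies (1)--(4) inside $\Sigma_i$. Applying the maximum principle to $(\partial_t-L)(G_{i+1}-G_i)=0$ with non-negative boundary data shows $\{G_i\}$ is monotone non-decreasing in $i$. Comparison with the Dirichlet heat kernel $G_i^\Delta$ of $-\Delta$ yields $0\le G_i(x,y,t)\le e^{\|A\|_\infty^2 t}G_i^\Delta(x,y,t)$, and Euclidean-type Gaussian bounds on $G_i^\Delta$ (available because each end of $\Sigma$ is asymptotically planar) give uniform local $L^\infty$ control of $G_i$ away from the initial diagonal.

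Next I would promote the pointwise monotone limit $G:=\lim_i G_i$ to a smooth limit. Fixing a compact set $K\subset\Sigma\times\Sigma\times(0,\infty)$, interior parabolic Schauder estimates (applied in each of the variables $x$, $y$, $t$, using coefficient bounds coming from Schoen's asymptotic control on the metric and curvature) upgrade the uniform $L^\infty$ bound on $G_i$ to $C^{k,\alpha}(K)$ bounds that are independent of $i$ for $i$ large. Hence $G\in C^\infty(\Sigma\times\Sigma\times\mathbb{R}^+)$ and $(L-\partial_t)G=0$, giving~(3). Symmetry~(1) passes to the limit from each $G_i$. For~(4), the functional calculus gives $e^{tL}=e^{(t-s)L}e^{sL}$, and identifying $e^{tL}$ with convolution against $G$ turns this into the stated integral identity via Fubini. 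For~(2), $L^2$-convergence $e^{tL}f\to f$ as $t\to 0^+$ combined with smoothness of $G$ for $t>0$ identifies the distributional limit of $G(\cdot,x,t)$ as $\delta_x$.

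The main obstacle is establishing smooth — not merely pointwise — convergence of the exhaustion $G_i\to G$ on compact sets. This is where the finite total curvature hypothesis is indispensable: it simultaneously forces $|A|^2$ to be bounded (so $L$ is a bounded perturbation of $\Delta$) and produces the asymptotic planarity of the ends used to obtain uniform Gaussian-type control on $G_i^\Delta$, which is then fed into the Schauder estimates. Once these bounds are in place, the verification of (1)--(4) is essentially formal.
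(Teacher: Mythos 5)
Your construction is correct and matches the approach the paper implicitly relies on: the paper states this proposition as ``a well-known result'' and does not give a proof, but in the very next proposition (Proposition~\ref{prop2.8}) it works with exactly the Dirichlet exhaustion $G_j$ on $B_j(0)\cap\Sigma$, the monotone limit $G_j\nearrow G$, and the comparison bound $G(x,t;y,0)\le e^{\sup|A|^2\,t}\,p(x,t;y,0)$ that you describe. Your remarks correctly identify that finite total curvature plus Schoen's asymptotic structure give $|A|^2\in L^\infty$ (so $L$ is a bounded perturbation of $\Delta$) and asymptotic planarity of the ends, which supply the Gaussian bounds and Schauder coefficient control needed to promote pointwise convergence to $C^\infty_{\mathrm{loc}}$ convergence; the verifications of (1)--(4) then pass to the limit as you say.
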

Next, we establish both lower and upper bounds for the heat kernel  $G$.
\begin{proposition}\label{prop2.8}
	Suppose that $\Sigma^n\subset\mathbb{R}^{n+1}$ is a minimal hypersurface with finite total curvature. Denote $G(x,y,t)$ to be the heat kernel corresponding to the equation $(L-\frac{\partial}{\partial t})G=0$. For any $\delta>0$, there exists $C>0$ and $c_i>0,i=1,2$ depending only $\Sigma,\delta$ such that for $t\geq 1$
	\begin{equation}
		\begin{split}
			\left |G(x,y,t)-\sum_{\lambda_{i}<0 }e^{-\lambda_{i}t}\phi_{i}(x)\phi_{i}(y)\right|
			&\leq Ce^{\delta t}\left( e^{-c_1d^2(x,y)/t}+  e^{-c_2d(x)-c_2d(y)} \right)
		\end{split}
	\end{equation}
	where $d(x)=d(x,p)$.
\end{proposition}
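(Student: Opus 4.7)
The strategy is to pass to the ``reduced'' kernel
\[
G_+(x,y,t) := G(x,y,t) - \sum_{\lambda_i<0} e^{-\lambda_i t}\phi_i(x)\phi_i(y),
\]
which is the kernel of $e^{Lt}P_+$, where $P_+$ denotes the $L^2$-projection onto the orthogonal complement of $\mathrm{span}(\phi_1,\ldots,\phi_I)$. By the preceding lemma, on this complement $-L$ has spectrum $[0,\infty)$, so $\|e^{Lt}P_+\|_{L^2\to L^2}\le 1$. Heuristically, $G_+$ is the part of the heat kernel coming from the continuous spectrum and should behave Gaussianly, with no exponential growth, at the cost of some work to promote an $L^2$ contraction into a pointwise bound.

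First I would establish Agmon-type exponential decay of each eigenfunction: $|\phi_i(x)|\le C e^{-c\,d(x)}$ for some $c>0$. Since $\phi_i$ satisfies $\Delta \phi_i = -(|A|^2+\lambda_i)\phi_i$ with $-\lambda_i>0$, and since $|A|(x)\to 0$ at infinity (finite total curvature together with Schoen's description of the ends as graphs over a plane with decaying second fundamental form), the equation reduces at infinity to $\Delta \phi_i \approx -\lambda_i \phi_i$. Inserting an Agmon weight $e^{\sigma\rho(x)}$ with $\sigma<\sqrt{-\lambda_i}$, integrating by parts, and absorbing the $|A|^2$ term on the ends yields the decay; interior elliptic bootstrap then propagates it to $\nabla \phi_i$. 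Consequently $\big|\sum e^{-\lambda_i t}\phi_i(x)\phi_i(y)\big|\le C e^{|\lambda_1|t} e^{-c\,d(x)}e^{-c\,d(y)}$.

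Next I would combine Zhang's Gaussian upper bound on $G$ with the semigroup identity
\[
G_+(x,y,t) = \int_\Sigma G_+(x,z,s)\,G_+(z,y,t-s)\,dz
\]
and the $L^2$-contraction of $e^{Lt}P_+$. Choosing $s$ and $t-s$ of unit size at the two ends, Zhang's bound produces the Gaussian factor $e^{-c_1 d^2(x,y)/t}$ (the endpoint kernels $G_+(\cdot,\cdot,1)$ inherit Gaussian decay from $G$ after subtracting the exponentially-localized eigenfunction part), while the long middle interval contributes at worst an $e^{\delta t}$, not the $e^{|\lambda_1|t}$ one would get from $G$ itself, precisely because of the contraction on $H_+$. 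The Agmon bound on the subtracted sum supplies the residual $e^{-c_2 d(x)-c_2 d(y)}$ term.

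The main obstacle is this last step: converting the purely spectral $L^2$ contraction on the complement $H_+$ into a pointwise kernel estimate that simultaneously preserves the off-diagonal Gaussian decay and the exponential spatial decay at infinity, while paying only an arbitrarily small exponential in time. The subtraction of the discrete spectrum cancels the leading $e^{|\lambda_1|t}$ growth of $G$ exactly in $L^2$; making this cancellation quantitative \emph{pointwise}, uniformly over the noncompact manifold $\Sigma$, is precisely the improvement over the direct application of Zhang's bound that the proposition demands.
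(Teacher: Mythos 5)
You correctly identify all the raw ingredients the paper uses: the crude Gaussian upper bound $G(x,t;y,0)\le \frac{e^{C_0 t}}{(4\pi t)^{n/2}}e^{-d^2(x,y)/4t}$ with $C_0=\sup|A|^2$, the Agmon-type exponential decay of the finitely many $L^2$ eigenfunctions $\phi_i$, and a uniform-in-time bound on the reduced kernel $G_+ = G - \sum_{\lambda_i<0}e^{-\lambda_i t}\phi_i\otimes\phi_i$. Your route to the uniform bound --- $\|e^{Lt}P_+\|_{L^2\to L^2}\le 1$ together with $\|G_+(x,\cdot,1)\|_{L^2}^2=G_+(x,x,2)\le G(x,x,2)\le C$ and Cauchy--Schwarz on $G_+(x,y,t)=\langle G_+(x,\cdot,1),\,e^{L(t-2)}P_+G_+(\cdot,y,1)\rangle$ --- is a perfectly legitimate alternative to the paper's route (Dirichlet heat kernels on an exhaustion $B_j(0)\cap\Sigma$, spectral expansion, on-diagonal trace bound, then $j\to\infty$); both give $|G_+(x,y,t)|\le C_1$ for $t\ge 1$.

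The gap is in the final step, and you half-acknowledge it. Your claim that ``choosing $s$ and $t-s$ of unit size at the two ends, Zhang's bound produces the Gaussian factor $e^{-c_1 d^2(x,y)/t}$'' does not work: the endpoint kernels $G_+(x,\cdot,1)$ and $G_+(\cdot,y,1)$ carry Gaussians at the \emph{unit} time scale in $d(x,z)$ and $d(w,y)$, but the Cauchy--Schwarz/contraction step on the middle interval throws away all off-diagonal information, leaving only the bound $\|G_+(x,\cdot,1)\|_{L^2}\|G_+(\cdot,y,1)\|_{L^2}\le C_1$ with no dependence on $d(x,y)$. To get a \emph{time-$t$} Gaussian you would need something like Davies' exponential-weight perturbation, but the twisted operator $e^{\psi}Le^{-\psi}$ no longer preserves $H_+=\mathrm{span}\{\phi_i\}^{\perp}$, so the cancellation of the discrete spectrum does not survive the twist. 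The paper sidesteps all of this with an interpolation you do not have: once one knows both $|G_+|\le C_1$ and the crude pointwise bound $|G_+|\le \frac{e^{C_0 t}}{(4\pi t)^{n/2}}e^{-d^2(x,y)/4t}+C_3 e^{-\lambda_1 t-C_2 d(x)-C_2 d(y)}$ (the second summand from the eigenfunction decay), one writes $|G_+|=|G_+|^{1-\theta}|G_+|^{\theta}\le C_1^{1-\theta}\bigl(\cdots\bigr)^{\theta}$ with $\theta=\delta/C_4$ small; this converts the $e^{C_0 t}$ and $e^{-\lambda_1 t}$ growth into $e^{\delta t}$ while preserving fractional powers of the Gaussian and of the spatial exponentials, which is exactly the statement. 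Without this interpolation (or some genuine substitute such as Davies' method adapted to the projected semigroup), the proposal does not close, and you flag this yourself by calling it the ``main obstacle.''
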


\begin{proof}
	Let $G$ be the heat kernel of the schr\"odinger  operator $L=\Delta+|A|^2$. Since $\Delta+|A|^2\leq \Delta+\sup |A|^2$, we get
	\begin{equation}\label{eq2.22}
		G(x,y,t)\leq e^{\sup |A|^2t}p(x,y,t)\leq C\frac{e^{C_0t}}{ t^{n/2}}e^{-\frac{d^2(x,y)}{8t}},
	\end{equation}
	where $C_0=\sup |A|^2$ and $p(x,y,t)$ is the heat kernel of $\Delta$ on $\Sigma$.
	For a positive integer $j$, let $G_j$ be the Dirichlet heat kernel of $L_j\equiv (\Delta +|A|^2)|_{B_j(0)\cap \Sigma}$ on $B_j(0)\cap \Sigma$. Obviously, $G_j$ is non-decreasing as $j$ increases and $\lim\limits_{j\to \infty}G_j(x,y,t)=G(x,y,t)$ for all $x,y\in \Sigma$ and $t>0$. Since $B_j(0)\cap \Sigma$ is compact, we have
	\begin{equation}
		G_j(x,y,t)=\sum_{\lambda_{j,i}<0}e^{-\lambda_{j,i}t}\phi_{j,i}(x)\phi_{j,i}(y)+\sum_{\lambda_{j,i}\geq 0}e^{-\lambda_{j,i}t}\phi_{j,i}(x)\phi_{j,i}(y),
	\end{equation}
	where $\lambda_{j,i}$ and $\phi_{j,i}$ are the eigenvalues and normalized eigenfunctions  of $L_j$. Since $G_j(x,x,t)\leq G(x,x,t)\leq \frac{e^{C_0t}}{(4\pi t)^{n/2}}$, we have 
	\begin{equation}
		\sum_{\lambda_{j,i}\geq 0}e^{-\lambda_{j,i}}\phi_{j,i}^2(x)\leq G(x,x,1)\leq C_1.
	\end{equation}
	This shows that $\sum_{\lambda_{j,i}\geq 0}e^{-\lambda_{j,i}t}\phi_{j,i}^2(x)\leq  C_1$ for $t\geq 1$. Hence Cauchy-Schwarz's inequality implies
	\begin{equation}\label{eq2.27}
		\bigg|\sum_{\lambda_{j,i}\geq 0}e^{-\lambda_{j,i}t}\phi_{j,i}(x)\phi_{j,i}(y)\bigg|\leq C_1.
	\end{equation}
	Letting $j\to \infty$, we obtain
	\begin{equation}
		\bigg|G(x,y,t)-\sum_{\lambda_{i}< 0}e^{-\lambda_{i}t}\phi_{i}(x)\phi_{i}(y)\bigg|\leq C_1,
	\end{equation}
	for $t\geq 1$, where $\lambda_i$ is the eigenvalue of $L$. Here, we use that $\lambda_{j,i}\to \lambda_i$ and $\phi_{j,i}\to \phi_i$ in any compact domain on $M$.
	
	Denote $N_j\equiv$ the number of negative eigenvalues of $L_j\equiv
	(\Delta +|A|^2)|_{B_j(0)\cap \Sigma}$. There exist $j_0,N\in\mathbb{N}^+$ such that $N_j=N$ for $j>j_0$. By Lemma \ref{lm2-3} and $\rho(p,x)\sim 1+|x|$, we have
	\begin{equation}
		\left|\sum_{\lambda_{i}<0}e^{-\lambda_{i}t}\phi_{i}(x)\phi_{i}(y)\right|\leq C_1e^{-\lambda_{1}t-C_2d(x)-C_2d(y)},
	\end{equation}
	where $d(x)=d(p,x)$ and $d(y)=d(p,y)$ for a fixed $p\in \Sigma$ and $d$ denotes the intrinsic distance on $M$. For any fixed $x,y$, we know that when $j$ is sufficiently large
	\begin{equation}
		\left|\sum_{\lambda_{j,i}<0}e^{-\lambda_{j,i}t}\phi_{j,i}(x)\phi_{j,i}(y)\right|\leq C_3e^{-\lambda_{j,1}t-C_2d(x)-C_2d(y)}.
	\end{equation}
	Hence, 
	\begin{equation}\label{eq2.29}
		\begin{split}
			\left |\sum_{\lambda_{j,i}\geq 0}e^{-\lambda_{j,i}t}\phi_{j,i}(x)\phi_{j,i}(y)\right|&\leq G_j(x,y,t)+\left|\sum_{\lambda_{j,i}<0}e^{-\lambda_{j,i}t}\phi_{j,i}(x)\phi_{j,i}(y)\right|\\
			&\leq C\frac{e^{C_0t}}{ t^{n/2}}e^{-\frac{d^2(x,y)}{8t}}+ C_3e^{-\lambda_{1}t-C_2d(x)-C_2d(y)}.\\
		\end{split}
	\end{equation}
	Combining \eqref{eq2.29} with \eqref{eq2.27},
	we obtain
	\begin{equation}
		\begin{split}
			&\left |\sum_{\lambda_{j,i}\geq 0}e^{-\lambda_{j,i}t}\phi_{j,i}(x)\phi_{j,i}(y)\right|\\
			\leq &C_1^{1-\delta/C_4}\left(C\frac{e^{C_0t}}{ t^{n/2}}e^{-\frac{d^2(x,y)}{8t}}+ C_3e^{-\lambda_{1}t-C_2d(x)-C_2d(y)}\right)^{\delta/C_4}\\
			\leq &Ce^{\delta t}\left( e^{-c_1d^2(x,y)/t}+  e^{-c_2d(x)-c_2d(y)} \right),
		\end{split}
	\end{equation}
	for any  $0<\delta<1,t\geq 1$. Here $c_i,C_j$ is independent of $x,y$ and $C_4=\text{max}\{-\lambda_1,C_0\}$.
	Taking limit, we have 
	\begin{equation}
		\begin{split}
			\left |G(x,y,t)-\sum_{\lambda_{i}<0 }e^{-\lambda_{i}t}\phi_{i}(x)\phi_{i}(y)\right|
			&\leq Ce^{\delta t}\left( e^{-c_1d^2(x,y)/t}+  e^{-c_2d(x)-c_2d(y)} \right).
		\end{split}
	\end{equation}
\end{proof}

\subsection{$C^{2,\alpha}$ estimates} We fix $\delta_0\in (0,-\lambda_I), \alpha\in (0,1)$ throughout the section.  It will be convenient to consider the operator
\begin{equation}
	\begin{split}
		\iota_-:\textbf{a}=(a_1,\dots,a_I)\in\mathbb{R}^I\mapsto \sum_{j=1}^{I}a_je^{-\lambda_i\tau}\phi_j.
	\end{split}
\end{equation}
For $\mu\leq 0$, we define the spectral projector $\Pi_{<\mu}:L^2(\Sigma)\to L^2(\Sigma)$ given by:
\begin{equation}
	\begin{split}
		\Pi_{<\mu}:f\mapsto \sum_{j:\lambda_j<\mu}\langle f,\phi_j\rangle \phi_j.
	\end{split}
\end{equation}
and 
\begin{equation}
	\begin{split}
		f_j=\langle f,\phi_j\rangle \phi_j,1\leq j\leq I\text{ and }\tilde{f}=f-\Pi_{<0}f.
	\end{split}
\end{equation}
We revisit the inhomogeneous linear PDE
\begin{equation}\label{eq3.7}
	\begin{split}
		(\frac{\partial}{\partial t}-L)u=h\text{ on }\Sigma\times \mathbb{R}_-,
	\end{split}
\end{equation}
where $\mathbb{R}_-=(-\infty,0]$.
\begin{lemma}
	Fix $\delta>0$, $0<\delta^\prime<\mathrm{min}\{\delta,-\lambda_I\}$. Suppose that  $u,h$ satisfy \eqref{eq3.7} and
	\begin{equation}
		\int_{-\infty}^0\bigg|e^{-\delta\tau}\|h(\cdot,\tau)\|_{L^2}\bigg|^2d\tau<\infty.
	\end{equation}
	There is a unique solution $u$ of \eqref{eq3.7} such that 
	\begin{equation}
		\Pi_{<0}(u(\cdot,0))=0,
	\end{equation}
	\begin{equation}\label{eq38}
		\int_{-\infty}^0\bigg|e^{-\delta^\prime\tau }\left(\|u(\cdot,\tau)\|_{L^2}+\|u_\tau(\cdot,\tau)\|_{L^2}\right)\bigg|^2d\tau<\infty.
	\end{equation}
	It is given by 
	\begin{equation}
		\begin{split}
			u_j(x,t)&:=-\int_{t}^{0}e^{\lambda_j(s-t)}h_j(x,s)ds\\\
			\tilde{u}(x,t)&:=\int_{-\infty}^{t}\int_\Sigma G^{\geq 0}(x,y,t-s)\tilde{h}(y,s)d\mu ds\\
			&=\int_{-\infty}^{t}\int_\Sigma G(x,y,t-s)\tilde{h}(y,s)d\mu ds,
		\end{split}
	\end{equation}
	since $\langle\tilde{h},\phi_j\rangle=0$ for $1\leq j\leq I$.
	Moreover, for every $t\in \mathbb{R}_-$,
	\begin{equation}\label{eq2.40}
		e^{-\delta' t}\|u(\cdot,t)\|_{L^2}\leq C\bigg[\int_{-\infty}^0\bigg|e^{-\delta\tau}\|h(\cdot,\tau)\|_{L^2}\bigg|^2d\tau\bigg]^{1/2},
	\end{equation}
	where $C=C(\delta,\delta^\prime,\lambda_1,\cdots,\lambda_I)$ and $G^{\geq 0}:= G(x,y,t)-\sum_{i=1 }^Ne^{-\lambda_{i}t}\phi_{i}(x)\phi_{i}(y)$ is the ``heat kernel" of the nonnegative part of $L$.
\end{lemma}
\begin{proof}
	
	\textit{Step 1. existence and uniqueness.}\\
	
	We first show that $u(x,t)=\sum u_j(x,t)+\tilde{u}(x,t)$ is a solution of \eqref{eq3.7}. By direct calculation 
	\begin{equation}
		\frac{d}{dt}u_j(x,t)=\lambda_j\int_{t}^{0}e^{\lambda_j(s-t)}h_j(x,s)ds+h_j(x,t),
	\end{equation}
	and 
	\begin{equation}
		\begin{split}
			Lu_j(x,t)&=-L\int_{t}^{0}e^{\lambda_j(s-t)}h_j(x,s)ds=\lambda_j\int_{t}^{0}e^{\lambda_j(s-t)}h_j(x,s)ds.
		\end{split}
	\end{equation}
	So, we have 
	\begin{equation}
		\frac{d}{dt}\sum_{j=1}^I u_j(x,t)=L\sum_{j=1}^I u_j(x,t)+\sum_{j=1}^I h_j(x,t).
	\end{equation}
	Similarly,
	\begin{equation}
		\begin{split}
			\left(\frac{d}{dt}-L\right)\tilde{u}(x,t)&=\int_{-\infty}^{t}\int_\Sigma \left(\frac{d}{dt}-L\right)G(x,y,t-s)\tilde{h}(y,s)d\mu ds\\
			&+\lim_{s\to t}\int_\Sigma G(x,y,t-s)\tilde{h}(y,s)d\mu\\
			&=\tilde{h}(x,t).
		\end{split}
	\end{equation}
	For uniqueness, if $u,v$ are solutions of \eqref{eq3.7}, then $(\partial_t-L)(u-v)=0$. Hence, 
	\begin{equation}
		\frac{d}{dt}\|u_j-v_j\|^2=-2\lambda_j\|u_j-v_j\|^2.
	\end{equation}
	Since  $u_j(x,0)=v_j(x,0)$, we know that $u_j\equiv v_j$ and $\tilde{u}-\tilde{v}=u-v$.
	\begin{equation}
		\begin{split}
			\frac{d}{dt}\|\tilde{u}-\tilde{v}\|^2=2\langle \partial_t(\tilde{u}-\tilde{v}),(\tilde{u}-\tilde{v}) \rangle=2\langle L(\tilde{u}-\tilde{v}), (\tilde{u}-\tilde{v}) \rangle\leq 0.
		\end{split}
	\end{equation}
	For any $t$, we have
	\begin{equation}
		\begin{split}
			\|\tilde{u}(\cdot,t)-\tilde{v}(\cdot,t)\|^2\leq \int^{t-n}_{t-n-1}\|\tilde{u}(\cdot,\tau)-\tilde{v}(\cdot,\tau)\|^2d\tau.
		\end{split}
	\end{equation}
	Letting $n\to\infty$, we get $ \|\tilde{u}(\cdot,t)-\tilde{v}(\cdot,t)\|^2=0$, i.e., $u\equiv v$.\\
	
	\textit{Step 2.  prove the inequality \eqref{eq2.40}.}\\
	
	By Cauchy-Schwartz's inequality
	\begin{equation}\label{req49}
		\begin{split}
			\|u_j(\cdot,t)\|^2&=\int_\Sigma \bigg(\int_{t}^{0}e^{\lambda_j(s-t)}h_j(x,s)ds\bigg)^2dx\\
			&\leq  \int_{t}^{0}e^{2\lambda_j(s-t)+2\delta s}ds\int_t^0\int_\Sigma e^{-2\delta s}h^2_j(x,s)dxds\\
			&\leq e^{2\mathrm{min}\{-\lambda_I,\delta\} t}\int_{-\infty}^0\bigg|e^{-\delta\tau}\|h(\cdot,\tau)\|_{L^2}\bigg|^2d\tau\\
			&\leq  e^{2\delta' t}\int_{-\infty}^0\bigg|e^{-\delta\tau}\|h(\cdot,\tau)\|_{L^2}\bigg|^2d\tau.\\
		\end{split}
	\end{equation}
	For $\tilde{u}$, we have
	\begin{equation}
		\begin{split}
			\frac{d}{dt}\|\tilde{u}\|^2=2\langle \partial_t\tilde{u},\tilde{u} \rangle=2\langle L\tilde{u}+\tilde{h}, \tilde{u} \rangle\leq 2\langle \tilde{h}, \tilde{u} \rangle\leq 2\|\tilde{h}\|\|\tilde{u}\|. \\
		\end{split}
	\end{equation}
	So, 
	\begin{equation}\label{req51}
		\begin{split}
			&\|\tilde{u}(\cdot,t)\|\leq \int^{t}_{-\infty}\|\tilde{h}(\cdot,\tau)\|d\tau\\
			&\leq \bigg(\int^t_\infty e^{2\delta \tau}\bigg)^\frac{1}{2}\bigg(\int^{t}_{-\infty}e^{-2\delta \tau}\|\tilde{h}(\cdot,\tau)\|^2d\tau\bigg)^\frac{1}{2}\\
			&\leq  Ce^{\delta \tau}\bigg(\int^{t}_{-\infty}e^{-2\delta \tau}\|\tilde{h}(\cdot,\tau)\|^2d\tau\bigg)^\frac{1}{2}.\\
		\end{split}
	\end{equation}
	Combining \eqref{req49} and \eqref{req51}, we get \eqref{eq2.40}.\\
	
	\textit{Step 3.  prove the inequality \eqref{eq38}.}\\
	
	By \eqref{req49} and \eqref{req51}, we have 
	\begin{equation}
		\begin{split}
			\int_{-\infty}^0e^{-2\delta^\prime t }\|u(\cdot,t)\|^2dt\leq  C\int_{-\infty}^0 e^{2(\mathrm{min}\{-\lambda_I,\delta\}-\delta')t}\int^{t}_{-\infty}e^{-2\delta \tau}\|h(\cdot,\tau)\|^2d\tau<\infty.\\
		\end{split}
	\end{equation}
The estimate $\int_{-\infty}^0 e^{-\delta^\prime \tau} \|u_t(\cdot, \tau)\|_{L^2} d\tau < \infty$ is established through a standard energy argument involving integration by parts. We refer the reader to Evans \cite{Eva10} for a comprehensive derivation of such weighted energy estimates.
\end{proof}
\begin{lemma}\label{Lm2.8}
	Fix $\beta>n$, $\mathbf{a}\in \mathbb{R}^I$ and $0<\delta_0<-\lambda_I$. Suppose $u,h$ satisfy \eqref{eq3.7}. If
	\begin{equation}
		\sup_{\tau\leq 0}e^{-2\delta_0 \tau}	\|h(\cdot,\tau)\|_{C_{\beta+2}^{0,\alpha}}< \infty,
	\end{equation}
	\begin{equation}
		\Pi_{<0}(u(\cdot,0))=\iota_{-}(\mathbf{a})(\cdot,0),
	\end{equation}
	and
	\begin{equation}
		\int_{-\infty}^0\bigg|e^{-\delta_0\tau }(\|u(\cdot,\tau)\|_{L^2}+\|u_\tau(\cdot,\tau)\|_{L^2})\bigg|^2d\tau<\infty.
	\end{equation}
	Then, for every $\tau\leq 0$, 
	\begin{equation}
		e^{-\delta_0\tau}\|u(\cdot,\tau)-\iota_{-}(\mathbf{a})\|_{C^{0}_{\beta}}\leq C\sup\limits_{\tau\leq 0}e^{-2\delta _0\sigma}\|h(\cdot,\tau)\|_{C^{0}_{\beta}},
	\end{equation}
	with $C=C(\delta_0,\Sigma,\beta)$.
\end{lemma}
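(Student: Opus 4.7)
The plan is first to reduce to the case of zero initial projection: set $v := u - \iota_{-}(\mathbf{a})$, and note that $\iota_{-}(\mathbf{a})(x,\tau) = \sum_{j=1}^I a_j e^{-\lambda_j\tau}\phi_j(x)$ solves the homogeneous equation $(\partial_\tau - L)\iota_{-}(\mathbf{a}) = 0$. Then $(\partial_\tau - L)v = h$ with $\Pi_{<0}(v(\cdot,0)) = 0$, and the $L^2$ decay hypothesis still holds for $v$ because $\delta_0 < -\lambda_I \leq -\lambda_j$ controls the growth of each $e^{-\lambda_j\tau}$. The preceding lemma yields the Duhamel representation $v = \sum_{j=1}^I v_j + \tilde v$ with the explicit formulas. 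Writing $H := \sup_{\sigma\leq 0} e^{-2\delta_0\sigma}\|h(\cdot,\sigma)\|_{C^0_\beta}$, it suffices to prove $\sup_x r(x)^\beta |v(x,\tau)| \leq CH e^{\delta_0\tau}$ for every $\tau \leq 0$.

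For each component $v_j(x,\tau) = -\phi_j(x)\int_\tau^0 e^{\lambda_j(s-\tau)}\langle h(\cdot,s),\phi_j\rangle\,ds$, I would exploit the exponential decay of $\phi_j$ (the Agmon-type estimate recalled in the proof of Proposition \ref{prop2.8}) to conclude that $\int_\Sigma r(y)^{-\beta}|\phi_j(y)|\,d\mu < \infty$ and $\sup_x r(x)^\beta|\phi_j(x)| < \infty$. This gives $|\langle h(\cdot,s),\phi_j\rangle| \leq C\|h(\cdot,s)\|_{C^0_\beta} \leq CHe^{2\delta_0 s}$; a direct computation of $e^{-\lambda_j\tau}\int_\tau^0 e^{(\lambda_j + 2\delta_0)s}\,ds$ in the three cases $\lambda_j + 2\delta_0 > 0$, $= 0$, $< 0$, combined with $\delta_0 < -\lambda_j$, yields $r(x)^\beta|v_j(x,\tau)| \leq CHe^{\delta_0\tau}$ in each case.

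The principal obstacle is the estimate on $\tilde v(x,\tau) = \int_{-\infty}^\tau\!\int_\Sigma G^{\geq 0}(x,\tau;y,s)\tilde h(y,s)\,d\mu\,ds$. Since $\tilde h = h - \Pi_{<0}h$ and the projection onto the finite-dimensional negative eigenspace is bounded on $C^0_\beta$ (by the same estimates used above), one has $|\tilde h(y,s)| \leq CHe^{2\delta_0 s}r(y)^{-\beta}$. The task reduces to bounding the weighted operator norm $r(x)^\beta\int_\Sigma |G^{\geq 0}(x,\tau;y,s)|\, r(y)^{-\beta}\,d\mu$ as a function of $t := \tau - s$. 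I would split time into $t \geq 1$, where Proposition \ref{prop2.8} is applied with some $\delta \in (0,\delta_0)$, and $0 < t \leq 1$, where $|G^{\geq 0}| \leq |G| + \bigl|\sum_{j=1}^I e^{-\lambda_j t}\phi_j(x)\phi_j(y)\bigr|$ is controlled by the standard bound \eqref{eq2.22} together with the boundedness of the $\phi_j$. In either regime, the exponential-decay piece $e^{-c_2 d(x) - c_2 d(y)}$ contributes at most $Cr(x)^{-\beta}$ after $y$-integration, since exponentials dominate polynomials.

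For the Gaussian piece $e^{-c_1 d^2(x,y)/t}$, I would split the spatial integral: on $\{r(y) \geq r(x)/2\}$, $r(y)^{-\beta} \lesssim r(x)^{-\beta}$ and the Gaussian integrates to at most $Ct^{n/2}$ by the Euclidean volume growth of ends (a consequence of finite total curvature, cf.\ Schoen \cite{17}); on $\{r(y) < r(x)/2\}$, $d(x,y) \geq r(x)/2$ forces $e^{-c_1 d^2/t} \leq e^{-cr(x)^2/t}$, and $\int_\Sigma r^{-\beta}\,d\mu < \infty$ because $\beta > n$, while the elementary optimization $r^\beta e^{-cr^2/t} \leq Ct^{\beta/2}$ absorbs the weight. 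These bounds combine to give $r(x)^\beta\int_\Sigma |G^{\geq 0}|\,r(y)^{-\beta}\,d\mu \leq Ce^{2\delta t}$ for $t \geq 1$ and a constant for $0 < t \leq 1$. Plugging into the time integral and using $\int_{-\infty}^\tau e^{2\delta_0 s}\,e^{2\delta(\tau-s)}\,ds \leq C(\delta_0 - \delta)^{-1} e^{2\delta_0\tau}$ for $\delta < \delta_0$, one obtains $r(x)^\beta|\tilde v(x,\tau)| \leq CHe^{2\delta_0\tau} \leq CHe^{\delta_0\tau}$ for $\tau\leq 0$, completing the argument.
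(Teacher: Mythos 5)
Your proposal is correct and follows essentially the same strategy as the paper: reduce to $\mathbf{a}=\mathbf{0}$, use the Duhamel decomposition $u=\sum_j u_j+\tilde u$, estimate each $u_j$ from the exponential decay of $\phi_j$ and an explicit time integral, and for $\tilde u$ apply Proposition~\ref{prop2.8} with a time split (short times via the crude Gaussian bound \eqref{eq2.22}, long times via the improved kernel estimate) together with the same spatial dichotomy $d(p,y)\lessgtr d(x,p)/2$. The only cosmetic difference is that for $0<t-s\le 1$ you bound $|G^{\geq 0}|\le|G|+|\sum_j e^{-\lambda_j t}\phi_j\phi_j|$ whereas the paper uses orthogonality $\langle\tilde h,\phi_j\rangle=0$ to replace $G^{\geq 0}$ by $G$ directly.
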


\begin{proof}
	The constant $C$ in the proof may be different from line to line but is independent of $x$. Recall that $\rho(x)=(1+d(x,p)^2)^{1/2}\sim 1+|x|$.	Without loss of generality, we assume that $\mathbf{a}=\mathbf{0}$.
	
	\noindent \textit{ Step 1. We prove the lemma for $d(x,p)\geq 1$}.
	
	\textit{Step 1.1 We bound $\|\tilde{u}(\cdot,t)\|_{C^0_\beta}$ by $\|\tilde{h}\|_{C^0_\beta}$.}\\
	
	For $ t-s\geq 1$, let $\delta=\delta_0/2$ in Proposition \ref{prop2.8}, we have
	\begin{equation}\label{req52}
		\begin{split}
			&\bigg|\int_{ \Sigma}G^{\geq 0}(x,y,t-s)\tilde{h}(y,s)d\mu\bigg|\\
			&\leq Ce^{\delta_0(t-s)/2}\int_{ \Sigma}\left(e^{-\frac{c_1d^2(x,y)}{(t-s)}}+e^{-c_2d(x)-c_2d(y)}\right)
			|\tilde{h}(y,s)|d\mu\\
			&\leq  C\|\tilde{h}(\cdot,s)\|_{C^0_{\beta}}e^{\delta_0(t-s)/2}\int_{ \Sigma}e^{-\frac{c_1d^2(x,y)}{(t-s)}}\frac{1}{\rho^{\beta}(y)}d\mu\\
			&+ C\|\tilde{h}(\cdot,s)\|_{C^0_{\beta}}e^{\delta_0(t-s)/2}\int_{ \Sigma}e^{-c_2d(x)-c_2d(y)}\frac{1}{\rho^{\beta}(y)}d\mu.\\
		\end{split}
	\end{equation}
	Next, we show 
	\begin{equation}
		\begin{split}
			I:=\int_{ \Sigma}e^{-\frac{c_1d^2(x,y)}{(t-s)}}\frac{1}{\rho^{\beta}(y)}d\mu\leq e^{\delta_0(t-s)/2}\frac{C}{\rho^{\beta}(x)} 
		\end{split}
	\end{equation}
	and
	\begin{equation}
		II:=\int_{ \Sigma}e^{-c_2d(x)-c_2d(y)}\frac{1}{\rho^{\beta}(y)}d\mu\leq \frac{C}{\rho^{\beta}(x)} .
	\end{equation}
	For $I$, we can decompose $\Sigma$ as 
	\begin{equation}
		A_1\cup A_2:=\bigg\{y\in \Sigma:d(p,y)\leq \frac{d(x,p)}{2}\bigg\}\cup \bigg\{y\in \Sigma:d(p,y)>\frac{d(x,p)}{2}\bigg\}.
	\end{equation}
	Then, we obtain
	\begin{equation}\label{eq2.53}
		\begin{split}
			\int_{ \Sigma}e^{-\frac{c_1d^2(x,y)}{(t-s)}}\frac{1}{\rho^{\beta}(y)}d\mu&=\left\{\int_{ A_1}+\int_{A_2}\right\}e^{-\frac{c_1d^2(x,y)}{(t-s)}}\frac{1}{\rho^{\beta}(y)}d\mu:=I_1+I_2.\\
		\end{split}
	\end{equation}
	We next bound the integrals of the two terms on the right. 
	For any $y\in A_1$,
	\begin{equation}
		d(x,y)\geq d(x,p)-d(p,y)\geq \frac{d(x,p)}{2}.
	\end{equation}
	So, we have
	\begin{equation}\label{eq2.48}
		\begin{split}
			\int_{ A_1}e^{-\frac{c_1d^2(x,y)}{(t-s)}}\frac{1}{\rho^{\beta}(y)}d\mu
			\leq e^{-\frac{c_1d^2(p,x)}{4(t-s)}}\int_{A_1}\frac{1}{\rho^{\beta}(y)}d\mu
			\leq e^{-\frac{c_1d^2(p,x)}{4(t-s)}}\int_{A_1} 1 d\mu.
		\end{split}
	\end{equation}
	Since  $\Sigma$  has (intrinsic and extrinsic) at most Euclidean volume growth, we know that 
	\begin{equation}
		\int_{A_1} 1 d\mu= \mathrm{Vol}(A_1)\leq Cd^n(x,p)\leq C\rho^n(x).
	\end{equation}
	So,
	\begin{equation}\label{r58}
		\begin{split}
			I_1  &\leq C\rho^n(x)e^{-\frac{c_1d^2(x,p)}{4(t-s)}}
			\\
			&=C\frac{\rho^{n+\beta}(x)}{d^{n+\beta}(x,p)}\frac{(t-s)^{(n+\beta)/2}}{\rho^{\beta}(x)}\left(\frac{d^2(x,p)}{t-s}\right)^{(n+\beta)/2}e^{-\frac{c_1d^2(x,y)}{4(t-s)}}\\
			&\leq C(t-s)^{(n+\beta)/2}\rho^{-\beta}(x)   
		\end{split}
	\end{equation}
	where we use  $\sup_{\tau\geq 0} \tau ^{(n+\beta)/2}e^{-\frac{c_1}{4}\tau}<\infty$.
	
	For the second term $I_2$,
	\begin{equation}\label{eq2.49}
		\begin{split}
			&\int_{ A_2}e^{-\frac{c_1d^2(x,y)}{(t-s)}}\frac{1}{\rho^{\beta}(y)}d\mu\\
			&\leq  \frac{C}{\rho^{\beta}(x)} \int_{ A_2}e^{-\frac{c_1d^2(x,y)}{(t-s)}}d\mu\\
			&\leq \frac{C}{\rho^{\beta}(x)}\int_{\Sigma}e^{-\frac{c_1d^2(x,y)}{(t-s)}}d\mu\\
			&\leq C(t-s)^{n/2}\rho^{-\beta}(x).
		\end{split}
	\end{equation}
	Combining \eqref{r58} and \eqref{eq2.49}, we have
	\begin{equation}\label{req60}
		I\leq C(t-s)^{(n+\beta)/2}\rho^{-\beta}(x)+C(t-s)^{n/2}\rho^{-\beta}(x)\leq Ce^{\delta_0(t-s)/2}\rho^{-\beta}(x).
	\end{equation}
	For  $II$, by direct calculation, we have
	\begin{equation}\label{req61}
		II=\int_{ \Sigma}e^{-c_2d(x)-c_2d(y)}\frac{1}{\rho^{\beta}(y)}d\mu\leq Ce^{-c_2d(x)}\leq C\rho^{-\beta}(x).
	\end{equation}
	Plugging \eqref{req60} and \eqref{req61} into \eqref{req52}, we get
	\begin{equation}\label{req62}
		\begin{split}
			\bigg|\int_{ \Sigma}G^{\geq 0}(x,y,t-s)\tilde{h}(y,s)d\mu\bigg|
			\leq C\|\tilde{h}(\cdot,s)\|_{C^0_{\beta}}e^{\delta_0(t-s)}\rho^{-\beta}(x).
		\end{split}
	\end{equation}
	For $0< t-s\leq 1$, \eqref{eq2.22} implies
	\begin{equation}
		\begin{split}
			&\bigg|\int_{ \Sigma}G^{\geq0}(x,y,t-s)\tilde{h}(y,s)d\mu\bigg|\\
			&= \bigg|\int_{ \Sigma}G(x,y,t-s)\tilde{h}(y,s)d\mu\bigg|\\
			&\leq \frac{C}{(t-s)^{n/2}}\int_{ \Sigma}e^{-\frac{d^2(x,y)}{8(t-s)}}
			|\tilde{h}(y,s)|d\mu\\
			&\leq  C\|\tilde{h}(\cdot,s)\|_{C^0_{\beta}}\frac{C}{(t-s)^{n/2}}\int_{ \Sigma}e^{-\frac{d^2(x,y)}{8(t-s)}}\frac{1}{\rho^{\beta}(y)}d\mu.\\
		\end{split}
	\end{equation}
	Similar to \eqref{r58} and \eqref{eq2.49}, we have 
	\begin{equation}\label{req64}
		\begin{split}
			&\int_{ \Sigma}{(t-s)^{-n/2}}e^{-\frac{d^2(x,y)}{8(t-s)}}\frac{1}{\rho^{\beta}(y)}d\mu\\
			&={(t-s)^{-n/2}}\left\{\int_{ A_1}+\int_{A_2}\right\}e^{-\frac{d^2(x,y)}{8(t-s)}}\frac{1}{\rho^{\beta}(y)}d\mu\\
			&\leq C\frac{1+(t-s)^{\beta/2}}{\rho^{\beta}(x)}\\
			&\leq  \frac{C}{\rho^{\beta}(x)}.
		\end{split}
	\end{equation}
	Combining \eqref{req62} and \eqref{req64}, we get 
	\begin{equation}
		\begin{split}
			\rho^{\beta}(x)|\tilde{u}(\cdot,t)|&\leq \rho^{\beta}(x) \left\{\int_{t-1}^t+\int_{-\infty}^{t-1}\right\}|G^{\geq 0}*\tilde{h}(\cdot,s)|ds\\
			&\leq  C\int_{t-1}^t\|\tilde{h}(\cdot,s)\|_{C^0_{\beta}}	ds+C\int_{-\infty}^{t-1}\|\tilde{h}(\cdot,s)\|_{C^0_{\beta}}e^{\delta_0(t-s)}ds.
		\end{split}
	\end{equation}
	By direct calculation, we have 
	\begin{equation}
		\begin{split}
			\int_{t-1}^t\|\tilde{h}(\cdot,s)\|_{C^0_{\beta}}ds\leq e^{\delta_0 t}\sup_{\tau\in [t-1,t]}\left(e^{-2\delta_0 \tau}\|\tilde{h}(\cdot,\tau)\|_{C^0_{\beta}}\right),
		\end{split}
	\end{equation}
	and 
	\begin{equation}
		\begin{split}
			&\int_{-\infty}^{t-1}\|\tilde{h}(\cdot,s)\|_{C^0_{\beta}}e^{\delta_0(t-s)}ds\\
			&=\int_{-\infty}^{t-1}\bigg(e^{-2\delta_0s}\|\tilde{h}(\cdot,s)\|_{C^0_{\beta}}\bigg)e^{\delta_0(t+s)}ds\\
			&\leq \sup_{\tau\leq t-1}\left(e^{-2\delta_0 \tau}\|\tilde{h}(\cdot,\tau)\|_{C^0_{\beta}}\right)\int_{-\infty}^{t-1}e^{\delta_0(t+s)}ds\\
			&\leq Ce^{\delta_0t}\sup_{\tau\leq t}\left(e^{-2\delta_0 \tau}\|\tilde{h}(\cdot,\tau)\|_{C^0_{\beta}}\right).
		\end{split}
	\end{equation}
	So,
	\begin{equation}
		\rho^{\beta}(x)|\tilde{u}(\cdot,t)|\leq Ce^{\delta_0t}\sup_{\tau\leq t}\left(e^{-2\delta_0 \tau}\|\tilde{h}(\cdot,\tau)\|_{C^0_{\beta}}\right).
	\end{equation}
	
	\textit{Step 1.2 We show $\|h_j(\cdot,t)\|_{C^0_\beta},\|\tilde{h}(\cdot,t)\|_{C^0_\beta}\leq C\|h(\cdot,t)\|_{C^0_\beta}$.}\\
	On the other hand, 
	\begin{equation}
		\begin{split}
			|h_j(\cdot,t)|=|\langle h,\phi_j\rangle \phi_j|\leq \|h(\cdot,t)\|_{C^0_\beta} \|\phi_j\|_{C^0}\int_{\Sigma}\rho^{-\beta}(x)\cdot |\phi_j|\leq C\|h(\cdot,t)\|_{C^0_\beta}. 
		\end{split}
	\end{equation}
	where we use the exponential decay of $\phi_j$. Hence,
	\begin{equation}\label{req71}
		\begin{split}
			\|\tilde{h}(\cdot,t)\|_{C^0_\beta}	= \|h(\cdot,t)-\sum_{j=1}^{I}h_j\|_{C^0_\beta}\leq C\|h(\cdot,t)\|_{C^0_\beta}
		\end{split}
	\end{equation}
	which implies
	\begin{equation}
		\begin{split}
			\rho^{\beta}(x)|\tilde{u}(\cdot,t)|\leq Ce^{\delta_0t}\sup_{\tau\leq t}\left(e^{-2\delta_0 \tau}\|h(\cdot,\tau)\|_{C^0_{\beta}}\right).
		\end{split}
	\end{equation}
	
	\textit{Step 1.3. We control $\|u_j\|_{C^0_{\beta}}$.}\\
	
	For  $u_j$, 
	\begin{equation}
		\begin{split}
			\rho^{\beta}(x)|u_j(x,t)|&=	\rho^{\beta}(x)\bigg|\int_{t}^0e^{\lambda_j(s-t)}h_j(x,s)ds\bigg|\\
			&\leq C\int_{t}^{0}e^{\lambda_j(s-t)}\|h(\cdot,s)\|_{C^0_\beta}ds\\
			&\leq C\sup_{s\leq 0}\left(e^{-2\delta_0 s}\|h(\cdot,s)\|_{C^{0}_{\beta}}\right)\int_{t}^{0}e^{\lambda_j(s-t)+\delta_0s}ds\\
			&\leq Ce^{\delta_0 t} \sup_{s\leq 0}e^{-2\delta_0 s}\|h(\cdot,s)\|_{C^{0}_{\beta}}
		\end{split}
	\end{equation}
	where we use 
	\begin{equation}
		\begin{split}
			\int_{t}^{0}e^{\lambda_j(s-t)+\delta_0s}ds&=\frac{1}{\lambda_j+\delta_0}e^{-\lambda_jt}e^{(\lambda_j+\delta_0)s}\bigg|^0_t\\
			&=\frac{1}{-(\lambda_j+\delta_0)}e^{-\lambda_jt}\bigg(e^{(\lambda_j+\delta_0)t}-1\bigg)\\
			&\leq Ce^{\delta_0 t}.
		\end{split}
	\end{equation}
	
	\textit{Step 1.4. We get the conclusion.}\\
	Noting that $u=\sum_{j} u_j+\tilde{u}$, we arrive at
	\begin{equation}
		e^{-\delta_0t}\|u(\cdot,t)-\iota_{-}(\mathbf{a})\|_{C^{0}_{\beta}(\Sigma\setminus B_1(p))}\leq C\sup_{\tau\leq 0}e^{-2\delta_0 \tau}\|h(\cdot,\tau)\|_{C^{0}_{\beta}}.
	\end{equation}
	
	\noindent \textit{ Step 2. We prove the lemma for $d(x,p)\leq 1$}.
	
	\textit{Step 2.1 We bound $\|\tilde{u}(\cdot,t)\|_{C^0}$ by $\|\tilde{h}\|_{C^0_\beta}$.}\\
	For $ t-s\geq 1$, let $\delta=\delta_0/2$ in Proposition \ref{prop2.8}, we have
	\begin{equation}\label{req76}
		\begin{split}
			&\bigg|\int_{ \Sigma}G^{\geq 0}(x,y,t-s)\tilde{h}(y,s)d\mu\bigg|\\
			&\leq Ce^{\delta_0(t-s)/2}\int_{ \Sigma}\left(e^{-\frac{c_1d^2(x,y)}{(t-s)}}+e^{-c_2d(x)-c_2d(y)}\right)
			|\tilde{h}(y,s)|d\mu\\
			&\leq  C\|\tilde{h}(\cdot,s)\|_{C^0_\beta}e^{\delta_0(t-s)/2}\int_{ \Sigma}e^{-\frac{c_1d^2(x,y)}{(t-s)}}d\mu\\
			&+ C\|\tilde{h}(\cdot,s)\|_{C^0_\beta}e^{\delta_0(t-s)/2}\int_{ \Sigma}e^{-c_2d(x)-c_2d(y)}d\mu\\
			&\leq C\|\tilde{h}(\cdot,s)\|_{C^0_\beta}e^{\delta_0(t-s)}. 
		\end{split}
	\end{equation}
	For $0\leq t-s\leq 1$, \eqref{eq2.22} implies
	\begin{equation}\label{req77}
		\begin{split}
			&\bigg|\int_{ \Sigma}G^{\geq0}(x,y,t-s)\tilde{h}(y,s)d\mu\bigg|= \bigg|\int_{ \Sigma}G(x,y,t-s)\tilde{h}(y,s)d\mu\bigg|\\
			&\leq \frac{C}{(t-s)^{n/2}}\int_{ \Sigma}e^{-\frac{d^2(x,y)}{8(t-s)}}
			|\tilde{h}(y,s)|d\mu\\
			&\leq  C\|\tilde{h}(\cdot,s)\|_{C^0_\beta}.
		\end{split}
	\end{equation}
	Combining \eqref{req76} and \eqref{req77}, we get 
	\begin{equation}
		\begin{split}
			|\tilde{u}(\cdot,t)|&\leq  \left\{\int_{t-1}^t+\int_{-\infty}^{t-1}\right\}|G^{\geq 0}*\tilde{h}(\cdot,s)|ds\\
			&\leq  C\int_{t-1}^t\|\tilde{h}(\cdot,s)\|_{C^0_{\beta}}	ds+C\int_{-\infty}^{t-1}\|\tilde{h}(\cdot,s)\|_{C^0_{\beta}}e^{\delta_0(t-s)}ds\\
			&\leq Ce^{\delta_0t}\sup_{\tau\leq t}\left(e^{-2\delta_0 \tau}\|\tilde{h}(\cdot,\tau)\|_{C^0_{\beta}}\right).
		\end{split}
	\end{equation}
	
	\textit{Step 2.2. We control $\|u_j\|_{C^0}$.}\\
	For $u_j$, 
	\begin{equation}
		\begin{split}
			|u_j(x,t)|&=\bigg|\int_{t}^0e^{\lambda_j(s-t)}h_j(x,s)ds\bigg|\\
			&\leq C\int_{t}^{0}e^{\lambda_j(s-t)}\|h(\cdot,s)\|_{C^0_\beta}ds\\
			&\leq C\sup_{s\leq 0}\left(e^{-2\delta_0 s}\|h(\cdot,s)\|_{C^{0}_\beta}\right)\int_{t}^{0}e^{\lambda_j(s-t)+\delta_0s}ds\\
			&\leq Ce^{\delta_0 t} \sup_{s\leq 0}e^{-2\delta_0 s}\|h(\cdot,s)\|_{C^{0}_\beta}.
		\end{split}
	\end{equation}
	
	\textit{Step 2.3 We get the conclusion.}\\
	Noting that $\rho^\beta(x)\sim 1$, we have
	\begin{equation}
		\begin{split}
			\rho^\beta(x)|u_j(x,t)|\leq Ce^{\delta_0 t} \sup_{s\leq 0}e^{-2\delta_0 s}\|h(\cdot,s)\|_{C^{0}_\beta},
		\end{split}
	\end{equation}
	and
	\begin{equation}
		\begin{split}
			\rho^\beta(x)|\tilde{u}(\cdot,t)|\leq Ce^{\delta_0t}\sup_{\tau\leq t}\left(e^{-2\delta_0 \tau}\|\tilde{h}(\cdot,\tau)\|_{C^0_{\beta}}\right).
		\end{split}
	\end{equation}
	Finally, we prove 
	\begin{equation}
		e^{-\delta_0t}\|u(\cdot,t)-\iota_{-}(\mathbf{a})\|_{C^{0}_{\beta}}\leq C\sup_{\tau\leq 0}e^{-2\delta_0 \tau}\|h(\cdot,\tau)\|_{C^{0}_{\beta}}.
	\end{equation}
\end{proof}
We have the following global weighted $C^{2,\alpha}$ estimate
\begin{proposition}\label{prop2.9}
	Suppose $u,h$ satisfy \eqref{eq3.7}. Given $0<\delta_0<-\lambda_I$, there exists $C=C(\Sigma,\alpha,\beta,\delta_0)$ such that
	\begin{equation}
		\begin{split}
			e^{-\delta_0 t}\|u(\cdot,t)-\iota_{-}(\mathbf{a})\|_{C^{2,\alpha}_\beta}\leq C\sup\limits_{\tau\leq 0}e^{-2\delta_0 \tau}\|h(\cdot,\tau)\|_{C^{0,\alpha}_{\beta+2}},
		\end{split}
	\end{equation}
	for all $t\in\mathbb{R}_-$$(\mathbb{R}_{-}=(-\infty,0])$.
\end{proposition}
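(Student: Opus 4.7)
The plan is to reduce to $\mathbf{a}=\mathbf{0}$ and then bootstrap the weighted $C^0$ bound of Lemma \ref{Lm2.8} up to $C^{2,\alpha}$ via a weighted Schauder estimate. First I set $v:=u-\iota_-(\mathbf{a})$. Because $L\phi_j=-\lambda_j\phi_j$ and $\partial_\tau e^{-\lambda_j\tau}=-\lambda_je^{-\lambda_j\tau}$, the finite sum $\iota_-(\mathbf{a})(x,\tau)=\sum_{j=1}^{I}a_j e^{-\lambda_j\tau}\phi_j(x)$ is an explicit solution of the homogeneous equation, so $v$ solves $(\partial_\tau-L)v=h$ with $\Pi_{<0}(v(\cdot,0))=0$. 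Hence Lemma \ref{Lm2.8} applies to $v$ and yields
\begin{equation*}
e^{-\delta_0\sigma}\|v(\cdot,\sigma)\|_{C^0_\beta}\le C\sup_{\rho\le 0}e^{-2\delta_0\rho}\|h(\cdot,\rho)\|_{C^0_{\beta}}\le C\sup_{\rho\le 0}e^{-2\delta_0\rho}\|h(\cdot,\rho)\|_{C^{0,\alpha}_{\beta+2}}
\end{equation*}
for every $\sigma\le 0$, where in the second inequality I used $\rho\ge 1$ and hence $\|\cdot\|_{C^0_\beta}\le\|\cdot\|_{C^{0,\alpha}_{\beta+2}}$.

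Next I would feed this estimate into a slightly strengthened form of Lemma \ref{WS},
\begin{equation*}
\|v(\cdot,\tau)\|_{C^{2,\alpha}_\beta}\le C\sup_{\sigma\le\tau}\bigl(\|v(\cdot,\sigma)\|_{C^0_\beta}+\|h(\cdot,\sigma)\|_{C^{0,\alpha}_{\beta+2}}\bigr),
\end{equation*}
multiply through by $e^{-\delta_0\tau}$, and use that $e^{\delta_0\sigma}\le e^{\delta_0\tau}\le 1$ and $e^{2\delta_0\sigma}\le e^{2\delta_0\tau}\le 1$ for $\sigma\le\tau\le 0$ to collapse both terms on the right into a single multiple of $\sup_{\sigma\le 0}e^{-2\delta_0\sigma}\|h(\cdot,\sigma)\|_{C^{0,\alpha}_{\beta+2}}$. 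Replacing $v$ by $u-\iota_-(\mathbf{a})$ then gives the proposition.

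The principal obstacle is the gap between the $\|v\|_{C^{0,\alpha}_\beta}$ appearing on the right of Lemma \ref{WS} as stated and the $\|v\|_{C^0_\beta}$ that Lemma \ref{Lm2.8} actually delivers. To close it I would rerun the scaling argument in the proof of Lemma \ref{WS} but invoke the sharper standard interior parabolic Schauder estimate on $(B_{3}\setminus B_2)\times[-1,0]$, namely $\|w\|_{C^{2,\alpha}}\le C(\|w\|_{C^0}+\|f\|_{C^{0,\alpha}})$, which controls $w$ by only its $C^0$ norm on the slightly larger cylinder together with the $C^{0,\alpha}$ norm of the inhomogeneity. Under the rescaling $w(x,\tau)=r_0^\beta v(r_0 x,t_0+r_0^2\tau)$ used there, this upgrades the weighted Schauder inequality to the stronger form displayed above, after which the entire argument is a direct combination of Lemmas \ref{WS} and \ref{Lm2.8} with the parabolic exponential weights.
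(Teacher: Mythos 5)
Your argument is correct and coincides with the paper's proof: combine the weighted $C^0$ estimate of Lemma~\ref{Lm2.8} with the weighted parabolic Schauder estimate of Lemma~\ref{WS} applied to $u-\iota_-(\mathbf{a})$, then absorb the exponential weights using $\sigma\le\tau\le 0$. You also correctly spot and close a small mismatch the paper glosses over — Lemma~\ref{WS} as stated has $\|u\|_{C^{0,\alpha}_\beta}$ on the right, while Lemma~\ref{Lm2.8} only delivers $\|u\|_{C^0_\beta}$ — by noting that the standard interior parabolic Schauder estimate used in the scaling argument actually gives the sharper form $\|w\|_{C^{2,\alpha}}\le C(\|w\|_{C^0}+\|f\|_{C^{0,\alpha}})$, which is the version the paper implicitly invokes.
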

\begin{proof}
	By Lemma \ref{WS} and Lemma \ref{Lm2.8}, we get 
	\begin{equation}
		\begin{split}
			&e^{-\delta_0t}\|u(\cdot,t)-\iota_{-}(\mathbf{a})\|_{C^{2,\alpha}_{\beta}}\\
			&\leq C\sup_{\tau\leq t}\left(e^{-\delta_0\tau}\|u(\cdot,\tau)-\iota_{-}(\mathbf{a})\|_{C^{0}_{\beta}}+e^{-\delta_0\tau}\|h(\cdot,\tau)\|_{C^{0,\alpha}_{2+\beta}}\right)\\
			&\leq C\sup_{\tau\leq t}\left(e^{-2\delta_0 \tau}\|h(\cdot,\tau)\|_{C^{0}_{\beta}}+e^{-\delta_0\tau}\|h(\cdot,\tau)\|_{C^{0,\alpha}_{2+\beta}}\right)\\
			& \leq C\sup_{\sigma\leq 0}e^{-2\delta_0 \tau}\|h(\cdot,\tau)\|_{C^{0,\alpha}_{2+\beta}}.
		\end{split}
	\end{equation}
\end{proof}

\subsection{Nonlinear error term.} 

Define the graph 
\begin{equation}
	\Gamma_t:=\{\mathbf{x}+u(\mathbf{x},t)\nu_{\Sigma}(\mathbf{x}):\mathbf{x}\in \Sigma\}.
\end{equation}
In graphical coordinates over $\Sigma$, the mean curvature flow is:
\begin{equation}\label{eq3.16}
	\begin{split}
		\frac{\partial}{\partial t}u=vH_{\Gamma_t}.
	\end{split}
\end{equation}
We can rewrite \eqref{eq3.16} as 
\begin{equation}\label{eq3.18}
	\begin{split}
		(\frac{\partial}{\partial t}-L)u=E(u)\text{ on }\Sigma\times \mathbb{R}_-,
	\end{split}
\end{equation}
since the mean curvature $ H_{\Gamma_t}$ of $\Gamma_t$ at $\mathbf{x}+u(\mathbf{x},t)\nu_{\Sigma}(\mathbf{x})$ satisfies
\begin{equation}
	v(\mathbf{x,t})H_{\Gamma_t} (\mathbf{x}+u(\mathbf{x},t)\nu_{\Sigma}(\mathbf{x}))=\left(\Delta_{\Sigma}u+|A|^2u\right)(\mathbf{x})+E(u),
\end{equation}
where $v$ is the geometry function
\begin{equation}
	\begin{split}
		v:=(1+|(\text{Id}-uA_\Sigma)^{-1}(\nabla_\Sigma u)|^2)^{\frac{1}{2}}=(\nu\cdot\nu_\Sigma)^{-1},
	\end{split}
\end{equation}
and the nonlinear error term $E(u)$ can be estimated as follows:

\begin{lemma}\label{lmm2.11}\cite{CCCS24}
	For $\beta>0$, there exists $\eta=\eta(\Sigma,\beta)$ such that for $u:\Sigma\to \mathbb{R}$ with $\|u\|_{C^{2,0}_{\beta}}\leq \eta$, the nonlinear error term decomposes as 
	\begin{equation}\label{eq2.54}
		\begin{split}
			E(u)(\mathbf{x})&=u(\mathbf{x})E_1(\mathbf{x},u(\mathbf{x}),\nabla_\Sigma u(\mathbf{x}),\nabla^2_\Sigma u(\mathbf{x}))\\
			&+\nabla_\Sigma u(\mathbf{x})\cdot\mathbf{E}_2(\mathbf{x},u(\mathbf{x}),\nabla_\Sigma u(\mathbf{x}),\nabla^2_\Sigma u(\mathbf{x})).
		\end{split}
	\end{equation}
	where $E_1,E_2$ are smooth functions on the following domains:
	\begin{equation*}
		\begin{split}
			&E_1(\mathbf{x},\cdot,\cdot,\cdot):\mathbb{R}\times T_{\mathbf{x}}\Sigma\times \mathrm{Sym}( T_{\mathbf{x}}\Sigma\otimes  T_{\mathbf{x}}\Sigma)\to \mathbb{R}\\
			& \textbf{E}_2(\mathbf{x},\cdot,\cdot,\cdot):\mathbb{R}\times T_{\mathbf{x}}\Sigma\times \mathrm{Sym}( T_{\mathbf{x}}\Sigma\otimes  T_{\mathbf{x}}\Sigma)\to  T_{\mathbf{x}}\Sigma.
		\end{split}
	\end{equation*}
	Moreover, we can estimate:
	\begin{equation}\label{eq2.55}
		\begin{split}
			r(\mathbf{x})^{2+j-l}|\nabla_{\mathbf{x}}^i\nabla_{z}^j\nabla_{\mathbf{q}}^k\nabla_{\mathbf{A}}^lE_1(\mathbf{x},z,\mathbf{q},\mathbf{A})|\leq C(r(\mathbf{x})^{-1}|z|+|\mathbf{q}|+r(\mathbf{x})|\mathbf{A}|),
		\end{split}
	\end{equation}
	\begin{equation}\label{eq2.56}
		\begin{split}
			r(\mathbf{x})^{1+j-l}|\nabla_{\mathbf{x}}^i\nabla_{z}^j\nabla_{\mathbf{q}}^k\nabla_{\mathbf{A}}^l\mathbf{E}_2(\mathbf{x},z,\mathbf{q},\mathbf{A})|\leq C(r(\mathbf{x})^{-1}|z|+|\mathbf{q}|+r(\mathbf{x})|\mathbf{A}|),
		\end{split}
	\end{equation}
	In the above, $C=C(\Sigma,\beta)$, $r(x)=1+|\mathbf{x}|$ and $i,j,k,l\geq 0$.
\end{lemma}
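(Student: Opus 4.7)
The plan is to start from an explicit graphical formula for $vH_{\Gamma_t}$ and perform a Taylor expansion to peel off the prefactors of $u$ and $\nabla_\Sigma u$. Parameterise the graph by $F_u(\textbf{x}) = \textbf{x} + u(\textbf{x})\nu_\Sigma(\textbf{x})$. A direct calculation gives the induced metric $g_{ij}(u) = (g_\Sigma)_{ij} - 2u(A_\Sigma)_{ij} + u^2(A_\Sigma)_i{}^k(A_\Sigma)_{kj} + \partial_i u\,\partial_j u$, and expresses the unit normal and second fundamental form of $\Gamma_t$ as smooth rational functions of $(u,\nabla_\Sigma u)$ whose coefficients are polynomial in $A_\Sigma$ and $g_\Sigma^{ij}$. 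Consequently $vH_{\Gamma_t}$ takes the form $\Phi(\textbf{x}, u, \nabla_\Sigma u, \nabla_\Sigma^2 u)$ with $\Phi$ smooth (and affine in the last slot) on the open set where $\mathrm{Id} - uA_\Sigma$ is invertible. Since $|A_\Sigma|\leq C/r$ and $\|u\|_{C^{2,0}_\beta}\leq \eta$ give $|uA_\Sigma|\leq C\eta\, r^{-(1+\beta)}$, choosing $\eta$ small enough makes this less than $1/2$ globally, so $\Phi$ and $E(u) := \Phi - Lu$ are globally defined on $\Sigma$.

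Next I would verify the factorisation. Treat $\Phi$ as a smooth function of abstract variables $(z, \mathbf{q}, \mathbf{A})$. Since $\Phi$ is affine in $\mathbf{A}$ with leading coefficient $g^{ij}(z,\mathbf{q})$ reducing to $g_\Sigma^{ij}$ at $(z,\mathbf{q}) = 0$, and since $\Sigma$ is minimal so that $\Phi(\textbf{x},0,0,0)=0$ and $D\Phi|_0 = L$, the reduced function $\widetilde{E}(\textbf{x}, z, \mathbf{q}, \mathbf{A}) := \Phi - (g_\Sigma^{ij}\mathbf{A}_{ij} + |A_\Sigma|^2 z)$ satisfies $\widetilde{E}(\textbf{x}, 0, 0, \mathbf{A})\equiv 0$ for every $\mathbf{A}$. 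Applying Taylor's theorem along the segment $s\mapsto (sz, s\mathbf{q})$ in the remaining variables then produces the decomposition \eqref{eq2.54} with
\begin{equation*}
E_1 = \int_0^1 \partial_z\widetilde{E}(\textbf{x}, sz, s\mathbf{q}, \mathbf{A})\,ds, \qquad \textbf{E}_2 = \int_0^1 \nabla_{\mathbf{q}}\widetilde{E}(\textbf{x}, sz, s\mathbf{q}, \mathbf{A})\,ds,
\end{equation*}
both smooth in $(\textbf{x}, z, \mathbf{q}, \mathbf{A})$ on the allowed domain.

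For the weighted bounds \eqref{eq2.55}-\eqref{eq2.56} I would use Schoen's decay $|\nabla_\Sigma^k A_\Sigma|\leq C_k\, r^{-1-k}$ on each end, already invoked in the proof of Lemma \ref{WS}. Each occurrence of $A_\Sigma$ in the closed form for $\Phi$ supplies an implicit $r^{-1}$ factor, each $\nabla_\Sigma A_\Sigma$ an $r^{-2}$, and so on; after the Taylor step the leading nonlinear term of $\widetilde{E}$ is a quadratic form in the rescaled variables $(r^{-1}z,\, \mathbf{q},\, r\mathbf{A})$ with coefficients uniformly bounded in $\textbf{x}$, which yields precisely the right-hand side $(r^{-1}|z|+|\mathbf{q}|+r|\mathbf{A}|)$. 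Each subsequent differentiation either removes one of the variable slots, with the corresponding $r$-weight absorbed into the $r^{2+j-l}$ or $r^{1+j-l}$ factor on the left, or falls on a coefficient and gains an extra $r^{-1}$ from a derivative of $A_\Sigma$ or $g_\Sigma$.

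The main obstacle is the purely combinatorial bookkeeping in this last step: one must track how each of the derivative operators $\nabla_{\textbf{x}}$, $\nabla_z$, $\nabla_{\mathbf{q}}$, $\nabla_{\mathbf{A}}$ distributes between the implicit $\Sigma$-geometric coefficients and the explicit variable slots, and verify that the worst-case net $r$-weight matches the stated exponent. This is tedious but elementary given the Schoen decay; the argument is essentially the one in \cite{15}, adapted to the present weighted H\"older setup.
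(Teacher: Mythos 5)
The paper supplies no proof of this lemma: it is quoted directly from Chodosh--Choi--Mantoulidis--Schulze \cite{15} (where it appears in the same form for graphs over a shrinker), so there is no in-paper argument to compare against. Your reconstruction is the natural one and consistent with the source: write $vH_{\Gamma_t}=\Phi(\textbf{x},z,\textbf{q},\textbf{A})$ with $\Phi$ smooth and affine in $\textbf{A}$ on the region where $\mathrm{Id}-uA_\Sigma$ is invertible, use minimality to see $\Phi(\textbf{x},0,0,\textbf{A})=g_\Sigma^{ij}\textbf{A}_{ij}$ so that $\widetilde E:=\Phi-(g_\Sigma^{ij}\textbf{A}_{ij}+|A_\Sigma|^2z)$ vanishes identically on $\{z=0,\textbf{q}=0\}$, Taylor along $s\mapsto(sz,s\textbf{q})$ to obtain \eqref{eq2.54} with $E_1=\int_0^1\partial_z\widetilde E\,ds$ and $\textbf{E}_2=\int_0^1\nabla_{\textbf{q}}\widetilde E\,ds$, and then invoke Schoen's end decay $|\nabla^k_\Sigma A_\Sigma|\lesssim r^{-1-k}$ for the weights.

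Two remarks on your sketch. First, your dimensional dictionary is off by one power of $r$: comparing with \eqref{eq2.57}, which reads $r|E(u)|\leq C(r^{-1}|u|+|\nabla_\Sigma u|)(r^{-1}|u|+|\nabla_\Sigma u|+r|\nabla_\Sigma^2 u|)$, one sees that $\widetilde E$ is $r^{-1}$ \emph{times} a bounded quadratic form in the rescaled variables $(r^{-1}z,\textbf{q},r\textbf{A})$, not the bounded quadratic form itself. With that correction, $\partial_z\widetilde E$ gains $r^{-1}\cdot r^{-1}=r^{-2}$ and $\nabla_{\textbf{q}}\widetilde E$ gains $r^{-1}$, which is exactly what the prefactors $r^{2+j-l}$ and $r^{1+j-l}$ in \eqref{eq2.55}--\eqref{eq2.56} require at $i=j=k=l=0$. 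Second, you explicitly defer the multi-index bookkeeping for general $(i,j,k,l)$; since the paper itself defers the whole lemma to \cite{15}, this is consistent with the paper's level of detail, but a self-contained proof would still have to carry out that induction on $(i,j,k,l)$ using the full strength of the Schoen decay for $\nabla^i_\Sigma$ of the coefficients.
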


Moreover, the nonlinear error term $E(u)$ can be estimated as follows:
\begin{corollary}\label{coro2.12}\cite{CCCS24}
	For $\beta>0$, there exists $\eta=\eta(\Sigma,\beta)$ such that for $u:\Sigma\to \mathbb{R}$ with $\|u\|_{C^{2,0}_{\beta}}\leq \eta$:
	\begin{equation}\label{eq2.57}
		\begin{split}
			r|E(u)|\leq C(r^{-1}|u|+|\nabla_\Sigma u|)(r^{-1}|u|+|\nabla_\Sigma u|+r|\nabla_\Sigma^2 u|),
		\end{split}
	\end{equation}
	\begin{equation}\label{eq2.58}
		\begin{split}
			\|E(u)\|_{C^{0,\alpha}_{\beta+2}}\leq C \|u\|_{C^{1,\alpha}_{\beta}}\|u\|_{C^{2,\alpha}_{\beta}},
		\end{split}
	\end{equation}
	and for $\bar{u}:\Sigma\to \mathbb{R}$ also with $\|\bar{u}\|_{C^{2,0}_\beta}\leq \eta$:
	\begin{equation}\label{eq2.59}
		\begin{split}
			r|E(\bar{u})-E(u)|\leq &C(r^{-1}|u|+|\nabla_\Sigma u|+r|\nabla_\Sigma^2 u|+r^{-1}|\bar{u}|+|\nabla_\Sigma \bar{u}|+r|\nabla_\Sigma^2 \bar{u}|)\\
			&\cdot(r^{-1}|\bar{u}-u|+|\nabla_\Sigma (\bar{u}-u)|+r|\nabla_\Sigma^2 (\bar{u}-u)|),
		\end{split}
	\end{equation}
	\begin{equation}\label{eq2.60}
		\begin{split}
			\|E(\bar{u})-E(u)\|_{C^{0,\alpha}_{\beta+2}}\leq C (\|\bar{u}\|_{C^{2,\alpha}_{\beta}}+\|u\|_{C^{2,\alpha}_{\beta}})\|\bar{u}-u\|_{C^{2,\alpha}_{\beta}},
		\end{split}
	\end{equation}
	above $C=C(\Sigma,\alpha,\beta)$.
\end{corollary}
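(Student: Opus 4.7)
The strategy is to deduce Corollary \ref{coro2.12} directly from the structural decomposition \eqref{eq2.54} together with the pointwise derivative bounds \eqref{eq2.55}--\eqref{eq2.56}, doing nothing more than substituting $z=u(\mathbf{x})$, $\mathbf{q}=\nabla_\Sigma u(\mathbf{x})$, $\mathbf{A}=\nabla_\Sigma^2 u(\mathbf{x})$ and bookkeeping the weights in $r$.

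First I would prove the pointwise estimate \eqref{eq2.57}. Setting $i=j=k=l=0$ in \eqref{eq2.55} and \eqref{eq2.56} gives
\begin{equation*}
r^2|E_1(\mathbf{x},u,\nabla u,\nabla^2 u)|+r|\mathbf{E}_2(\mathbf{x},u,\nabla u,\nabla^2 u)|\leq C(r^{-1}|u|+|\nabla_\Sigma u|+r|\nabla_\Sigma^2 u|).
\end{equation*}
Multiplying the $uE_1$ piece by $u$ produces a factor $r^{-1}|u|$ on the first factor, and multiplying $\nabla_\Sigma u\cdot\mathbf{E}_2$ by $\nabla_\Sigma u$ produces a factor $|\nabla_\Sigma u|$; summing yields \eqref{eq2.57}. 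The $C^0_{\beta+2}$ half of \eqref{eq2.58} is then immediate, since $r^{-1}|u|+|\nabla_\Sigma u|\leq r^{-\beta}\|u\|_{C^{1,0}_\beta}$ and $r^{-1}|u|+|\nabla_\Sigma u|+r|\nabla_\Sigma^2 u|\leq r^{-\beta}\|u\|_{C^{2,0}_\beta}$.

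For the Hölder seminorm in \eqref{eq2.58} I would differentiate the decomposition once in $\mathbf{x}$ via the chain rule; this produces terms of the form $(\nabla_\Sigma u)E_1$, $u(\nabla_\mathbf{x}E_1+\nabla_z E_1\cdot\nabla_\Sigma u+\nabla_\mathbf{q}E_1\cdot\nabla_\Sigma^2 u+\nabla_\mathbf{A}E_1\cdot\nabla_\Sigma^3 u)$, and analogous terms for $\mathbf{E}_2$. The higher-order bounds in \eqref{eq2.55}--\eqref{eq2.56} with $i+j+k+l=1$ are precisely what is needed so that each term carries the correct power of $r$; writing $[fg]_{\alpha,\gamma_1+\gamma_2}\leq\|f\|_{C^{0,\alpha}_{\gamma_1}}[g]_{\alpha,\gamma_2}+[f]_{\alpha,\gamma_1}\|g\|_{C^{0,\alpha}_{\gamma_2}}$ and grouping two factors of $u$ (or its derivatives) against one factor from $E_1,\mathbf{E}_2$ produces the quadratic bound $\|u\|_{C^{1,\alpha}_\beta}\|u\|_{C^{2,\alpha}_\beta}$.

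The difference estimates \eqref{eq2.59} and \eqref{eq2.60} follow from the same decomposition via the fundamental theorem of calculus applied to the one-parameter family $u_s:=u+s(\bar u-u)$, $s\in[0,1]$:
\begin{equation*}
E(\bar u)-E(u)=\int_0^1\tfrac{d}{ds}E(u_s)\,ds,
\end{equation*}
so the integrand is linear in $(\bar u-u,\nabla_\Sigma(\bar u-u),\nabla_\Sigma^2(\bar u-u))$ with coefficients that are smooth functions of $(\mathbf{x},u_s,\nabla_\Sigma u_s,\nabla_\Sigma^2 u_s)$. Those coefficients are $E_1,\mathbf{E}_2$ and their first derivatives evaluated on $u_s$, so \eqref{eq2.55}--\eqref{eq2.56} apply uniformly in $s\in[0,1]$ (using that $\|u_s\|_{C^{2,0}_\beta}\leq\eta$ for $\eta$ small). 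Integrating in $s$ and then taking Hölder norms exactly as in the previous step yields \eqref{eq2.59} and \eqref{eq2.60}.

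The only genuinely delicate step is the Hölder bookkeeping in \eqref{eq2.58} and \eqref{eq2.60}: one must verify that every term produced by the chain rule, including the $\nabla_\mathbf{A}E_1\cdot\nabla_\Sigma^3 u$ term, admits an $\alpha$-Hölder estimate with exponent $\beta+2$ (which it does because the $\nabla_\mathbf{A}$-derivative of $E_1$ carries an extra power $r$ by \eqref{eq2.55}, balancing the one higher derivative on $u$ via standard interpolation of Hölder norms). Everything else is routine substitution into the bounds already supplied by Lemma \ref{lmm2.11}.
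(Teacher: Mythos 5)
Your overall strategy matches the paper's: both proofs deduce the pointwise bound \eqref{eq2.57} by substituting into \eqref{eq2.54}--\eqref{eq2.56} with $i=j=k=l=0$, and both handle the difference estimates \eqref{eq2.59}--\eqref{eq2.60} by applying the fundamental theorem of calculus along the segment $U(t)=u+t(\bar u-u)$ to produce an integrand linear in $(\bar u-u,\nabla_\Sigma(\bar u-u),\nabla_\Sigma^2(\bar u-u))$ with coefficients controlled by \eqref{eq2.55}--\eqref{eq2.56} at first-derivative order. So far this is the paper's proof.

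However, your treatment of the H\"older seminorm in \eqref{eq2.58} contains a genuine gap. You propose to ``differentiate the decomposition once in $\mathbf{x}$ via the chain rule,'' which produces the term $u\,\nabla_{\mathbf{A}}E_1\cdot\nabla_\Sigma^3 u$. Since $u$ is only assumed to lie in $C^{2,\alpha}_\beta$, the third derivative $\nabla_\Sigma^3 u$ is not defined, and the fix you offer---that the extra factor of $r$ in the $\nabla_{\mathbf{A}}$ bound ``balances the one higher derivative on $u$ via standard interpolation of H\"older norms''---does not work: weights in $r$ control spatial decay, not regularity, and no interpolation inequality recovers $\nabla_\Sigma^3 u$ from $C^{2,\alpha}$ data. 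The correct route, which is what the paper implicitly relies on (and what the FTC step already does for you in \eqref{eq2.59}--\eqref{eq2.60}), is to bound the H\"older difference
\begin{equation*}
E_1(\mathbf{x}_1,u(\mathbf{x}_1),\nabla_\Sigma u(\mathbf{x}_1),\nabla_\Sigma^2 u(\mathbf{x}_1))-E_1(\mathbf{x}_2,u(\mathbf{x}_2),\nabla_\Sigma u(\mathbf{x}_2),\nabla_\Sigma^2 u(\mathbf{x}_2))
\end{equation*}
directly, by telescoping in the four arguments and using the first-derivative (Lipschitz) bounds from \eqref{eq2.55}--\eqref{eq2.56} together with the H\"older seminorms $[u]_\alpha$, $[\nabla_\Sigma u]_\alpha$, $[\nabla_\Sigma^2 u]_\alpha$, plus the H\"older product inequality you already wrote down. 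This never touches $\nabla_\Sigma^3 u$, and it is the only step where your proposal departs from (and falls short of) what is actually needed.
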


\begin{proof}
	\eqref{eq2.57},\eqref{eq2.58} follow by applying \eqref{eq2.54} to decompose $E(u)$ and \eqref{eq2.55},  \eqref{eq2.56} with $i=j=k=l=0$  to estimate the two terms in the decomposition.
	
	\eqref{eq2.59},\eqref{eq2.60} follow by applying \eqref{eq2.54} to decompose $E(u), E(\bar{u})$ , using the  expanding 
	\begin{equation}
		\begin{split}
			&	E_1(\cdot,\bar{u}(\textbf{x}),\nabla_\Sigma \bar{u}(\textbf{x}),\nabla_\Sigma^2 \bar{u}(\textbf{x}))-E_1(\cdot,u(\textbf{x}),\nabla_\Sigma u(\textbf{x}),\nabla^2_\Sigma u(\textbf{x}))\\
			=&\int_{0}^{1} U(t)\nabla_zE_1(\cdot,U(t),\nabla_\Sigma U(t),\nabla^2_\Sigma U(t))dt\\
			+&\int_{0}^{1} \nabla_\Sigma U(t)\cdot\nabla_{\textbf{q}}E_1(\cdot,U(t),\nabla_\Sigma U(t),\nabla^2_\Sigma U(t))dt\\
			+&\int_{0}^{1} \text{Tr}\bigg(\nabla_\Sigma^2 U(t)\cdot\nabla_{\textbf{A}}E_1(\cdot,U(t),\nabla_\Sigma U(t),\nabla^2_\Sigma U(t))\bigg)dt\\
		\end{split}
	\end{equation}
	\begin{equation}
		\begin{split}
			&	\textbf{E}_2(\cdot,\bar{u}(\textbf{x}),\nabla_\Sigma \bar{u}(\textbf{x}),\nabla_\Sigma^2 \bar{u}(\textbf{x}))-	\textbf{E}_2(\textbf{x},u(\textbf{x}),\nabla_\Sigma u(\textbf{x}),\nabla^2_\Sigma u(\textbf{x}))\\
			=&\int_{0}^{1} U(t)\nabla_z	\textbf{E}_2(\cdot,U(t),\nabla_\Sigma U(t),\nabla^2_\Sigma U(t))dt\\
			+&\int_{0}^{1} \nabla_\Sigma U(t)\cdot\nabla_{\textbf{q}}	\textbf{E}_2(\cdot,U(t),\nabla_\Sigma U(t),\nabla^2_\Sigma U(t))dt\\
			+&\int_{0}^{1} \text{Tr}\bigg(\nabla_\Sigma^2 U(t)\cdot\nabla_{\textbf{A}}	\textbf{E}_2(\cdot,U(t),\nabla_\Sigma U(t),\nabla^2_\Sigma U(t))\bigg)dt\\
		\end{split}
	\end{equation}
	where $U(t)=u+t(\bar{u}-u)$
	and then using \eqref{eq2.55},  \eqref{eq2.56} with $i=0,j+k+l=1$  to estimate the two terms in the expanding.
	
\end{proof}

\subsection{Ancient mean curvature flow}
We are ready to prove the main theorem. We continue to fix $\delta_0\in (0,-\lambda_I), \alpha\in (0,1)$ and $\beta> n$.
\begin{theorem}\label{thm4.1}
	Suppose that $\Sigma^n\subset \mathbb{R}^{n+1}$ is a minimal hypersurface. Let $I$ be the Morse index of $\Sigma$. There exists $\mu_0=\mu_0(\Sigma,\alpha,\delta_0)$ such that for every $\mu\geq \mu_0$, there exists a corresponding $\varepsilon=\varepsilon(\Sigma,\alpha,\delta_0,\mu)$ with the following property:
	For any $\mathbf{a}\in B_{\varepsilon}(\mathbf{0})\subset \mathbb{R}^I$ there exists a unique $\mathscr{S}:\Sigma\times \mathbb{R}_-\to \mathbb{R}$ so that the
	hypersurfaces $S(t ):=\text{ graph }_\Sigma \mathscr{S}(\mathbf{a})(\cdot,t)$ satisfy the mean curvature flow
	\begin{equation}\label{eq4.4}
		\begin{split}
			\frac{\partial }{\partial t}\mathbf{x}=\mathbf{H}_{S(t)}(\mathbf{x}),\; \forall \mathbf{x}\in S(t),
		\end{split}
	\end{equation}
	with the priori decay
	\begin{equation}
		\begin{split}
			\sup\limits_{t\in \mathbb{R}_-}e^{-\delta_0 t}\|(\mathscr{S}(\mathbf{a})-\iota_-(\mathbf{a}))(\cdot,t)\|_{C^{2,\alpha}_\beta}\leq \mu |\mathbf{a}|^2,
		\end{split}
	\end{equation}
	and the terminal condition $\Pi_{<0}(\mathscr{S}(\mathbf{a}))(\cdot,0)=\iota_-(\mathbf{a})(\cdot,0)$.
\end{theorem}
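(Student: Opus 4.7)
The strategy is Banach's fixed point theorem on the complete metric space
\begin{equation*}
X_{\mu,\mathbf{a}} := \Bigl\{ u : \Sigma \times \mathbb{R}_- \to \mathbb{R} \ : \ \sup_{\tau \leq 0} e^{-\delta_0 \tau}\|u(\cdot,\tau) - \iota_-(\mathbf{a})(\cdot,\tau)\|_{C^{2,\alpha}_\beta} \leq \mu |\mathbf{a}|^2 \Bigr\}
\end{equation*}
with distance $d(u,\bar u) := \sup_{\tau\leq 0} e^{-\delta_0 \tau}\|u(\cdot,\tau) - \bar u(\cdot,\tau)\|_{C^{2,\alpha}_\beta}$. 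Since $(\partial_\tau - L)\iota_-(\mathbf{a}) = 0$, the linear lemma preceding Lemma \ref{Lm2.8} lets me define $\Phi(u) := v + \iota_-(\mathbf{a})$, where $v$ is the unique decaying solution of $(\partial_\tau - L)v = E(u)$ on $\Sigma \times \mathbb{R}_-$ with $\Pi_{<0}(v(\cdot,0)) = 0$. Then $\Phi(u)$ solves \eqref{eq3.18} with the correct terminal condition, so a fixed point of $\Phi$ furnishes a graphical ancient mean curvature flow with prescribed terminal unstable mode $\iota_-(\mathbf{a})(\cdot,0)$.

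\textbf{Self-map and contraction.} For any $u \in X_{\mu,\mathbf{a}}$, the triangle inequality and the exponential spatial decay of the eigenfunctions give
\begin{equation*}
\|u(\cdot,\tau)\|_{C^{2,\alpha}_\beta} \leq C|\mathbf{a}| e^{-\lambda_I \tau} + \mu|\mathbf{a}|^2 e^{\delta_0 \tau},
\end{equation*}
so after shrinking $|\mathbf{a}|$ the smallness hypothesis of Corollary \ref{coro2.12} holds. Combining the quadratic estimate \eqref{eq2.58} with Proposition \ref{prop2.9} applied to $\Phi(u) - \iota_-(\mathbf{a})$ yields
\begin{equation*}
e^{-\delta_0 \tau}\|\Phi(u) - \iota_-(\mathbf{a})\|_{C^{2,\alpha}_\beta} \leq C \sup_{\sigma \leq 0} e^{-2\delta_0 \sigma}\|u(\cdot,\sigma)\|_{C^{2,\alpha}_\beta}^2 \leq C\bigl(|\mathbf{a}|^2 + \mu|\mathbf{a}|^3 + \mu^2|\mathbf{a}|^4\bigr),
\end{equation*}
where the cross and square terms $e^{-2(\delta_0+\lambda_I)\sigma}$ and $e^{-(\delta_0 + \lambda_I)\sigma}$ are bounded on $\sigma \leq 0$ precisely because $\delta_0 + \lambda_I < 0$. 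Setting $\mu_0 := 2C$ and choosing $\varepsilon$ small enough that $C(\mu + \mu^2 \varepsilon)\varepsilon \leq 1/2$ forces $\Phi(X_{\mu,\mathbf{a}}) \subset X_{\mu,\mathbf{a}}$. The difference estimate \eqref{eq2.60} together with Proposition \ref{prop2.9} (applied with $\mathbf{a} = 0$, since $\Pi_{<0}((\Phi(\bar u)-\Phi(u))(\cdot,0))=0$) then gives
\begin{equation*}
d(\Phi(\bar u), \Phi(u)) \leq C \sup_{\sigma} e^{-2\delta_0 \sigma}\bigl(\|u\|_{C^{2,\alpha}_\beta} + \|\bar u\|_{C^{2,\alpha}_\beta}\bigr)\|\bar u - u\|_{C^{2,\alpha}_\beta} \leq C\bigl(|\mathbf{a}| + \mu|\mathbf{a}|^2\bigr)\, d(\bar u, u),
\end{equation*}
which is a strict contraction after a further shrink of $\varepsilon$. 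Banach's fixed point theorem produces the unique $\mathscr{S}(\mathbf{a}) \in X_{\mu,\mathbf{a}}$, and standard interior parabolic regularity bootstraps the $C^{2,\alpha}$ solution of \eqref{eq3.18} to a smooth solution of the geometric equation \eqref{eq4.4}.

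\textbf{Main obstacle.} The delicate point is that both estimates have to close with target decay $\mu|\mathbf{a}|^2 e^{\delta_0\tau}$, i.e.\ quadratic in $|\mathbf{a}|$, while the dominant linear part $\iota_-(\mathbf{a})$ only decays like $|\mathbf{a}|e^{-\lambda_I \tau}$. When fed into the quadratic nonlinearity $E$, this produces a source term of size $|\mathbf{a}|^2 e^{-2\lambda_I \tau}$, and Proposition \ref{prop2.9} then requires the weight $e^{-2\delta_0\tau}$ times this to be bounded on $\tau \leq 0$, i.e.\ the spectral gap $\delta_0 < -\lambda_I$. This is exactly the hypothesis built into the construction, and it is what makes the $|\mathbf{a}|^2$-output of Cor.\ \ref{coro2.12} survive being reprocessed through the linear solution operator; without it, no fixed point of the required size exists.
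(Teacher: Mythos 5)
Your proof is correct and takes essentially the same route as the paper: both set up a Banach fixed-point argument on a ball around $\iota_-(\mathbf{a})$ in a weighted exponentially-decaying H\"older space, define the iteration map by solving the linearized flow with inhomogeneity $E(u)$ and prescribed terminal unstable projection, and close the self-map/contraction estimates by combining Proposition~\ref{prop2.9} with Corollary~\ref{coro2.12}, exploiting the gap $\delta_0<-\lambda_I$ exactly as you describe. The only cosmetic differences are that the paper's norm $\|\cdot\|_*$ additionally carries the $\|\partial_\tau u\|_{C^{0,\alpha}_{\beta+2}}$ piece and that the paper builds the terminal condition $\Pi_{<0}(u(\cdot,0))=\iota_-(\mathbf{a})(\cdot,0)$ directly into the fixed-point space; neither change affects the argument.
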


\begin{proof}
	The proof is similar to that in \cite[Theorem 6.1]{CCCS24}. For the readers’ convenience, we give the detailed proof here.	The geometric PDE \eqref{eq4.4} is equivalent to \eqref{eq3.18}. Consider the space 
	\begin{equation}
		\mathscr{C}[\mathbf{a}]:=\{u:\Sigma\times \mathbb{R}_-\to \mathbb{R}:\Pi_{< 0}(u(\cdot,0))=\iota_-(\mathbf{a})(\cdot,0),\|u\|_*<\infty\}
	\end{equation}
	where 
	\begin{equation}
		\|u\|_*:=\sup_{t\in \mathbb{R}_-}e^{-\delta_0 t}\left(\|u(\cdot,t)\|_{C^{2,\alpha}_{\beta}} +\|\frac{\partial}{\partial t}u(\cdot,t)\|_{C^{0,\alpha}_{\beta +2}}     \right).
	\end{equation}
	It is complete with respect to $d_*(u,\bar{u})=\|u-\bar{u}\|_*$ and it's easy to verify that $\|\iota_-(\mathbf{a})\|_*\leq C|\mathbf{a}|.$
	Let $\eta>0$ be as in Lemma \ref{lmm2.11}. For $u\in \mathscr{C}[\textbf{a}], \|u\|_*\leq \eta$, let $\mathscr{S}(u;\mathbf{a})$ be a solution of 
	\begin{equation}\label{eq2.78}
		\left(	\frac{\partial}{\partial t}-L\right)\mathscr{S}(u;\mathbf{a})=E(u)
	\end{equation}
	with $\mathscr{S}(u;\mathbf{a})(\cdot,0)=\iota _-(\mathbf{a})(\cdot,0)$. Equivalently, we are solving
	\begin{equation}
		\left(	\frac{\partial}{\partial t}-L\right)\left(\mathscr{S}(u;\mathbf{a})-\iota _-(\mathbf{a})\right)=E(u),\; \Pi_<0(\mathscr{S}(u;\mathbf{a})-\iota _-(\mathbf{a}))(\cdot,0)=0.
	\end{equation}
	Existence is guaranteed and  the  priori quadratic decay of $u$ implies 
	decay of $E(h)$ by Corollary \ref{coro2.12}. Proposition \ref{prop2.9} and \eqref{eq2.58} 
	imply:
	\begin{equation}
		\sup_{t\leq 0}e^{-\delta_0 t}\|(\mathscr{S}(u;\mathbf{a})-\iota _-(\mathbf{a}))(\cdot,t)\|_{C^{2,\alpha}_{\beta}}\leq C\|u\|_*^2.
	\end{equation}
	So, we have 
	\begin{equation}
		\|\mathscr{S}(u;\mathbf{a})-\iota _-(\mathbf{a})\|_*\leq C\|u\|_*^2.
	\end{equation}
	Therefore $\mathscr{S}(u;\textbf{a})\in \mathscr{C}[\textbf{a}]$. Noting that solutions of \eqref{eq2.78} are uniquely determined within $\mathscr{C}[\textbf{a}]$, $\mathscr{S}(u;\textbf{a})$ is a  well-defined map of small elements
	of $\mathscr{C}[\textbf{a}]$ into $\mathscr{C}[\textbf{a}]$.
	
	Likewise, for $\bar{u}\in \mathscr{C}[\textbf{a}],\|\bar{u}\|_*\leq \eta$, we have 
	\begin{equation}
		\begin{split}
			\left(	\frac{\partial}{\partial t}-L\right)(\mathscr{S}(\bar{u};\mathbf{a})-\mathscr{S}(u;\mathbf{a}))&=E(\bar{u})-E(u)\\
			\Pi_{<0}(\mathscr{S}(\bar{u};\mathbf{a})-\mathscr{S}(u;\mathbf{a}))&=0
		\end{split}
	\end{equation}
	Therefore the discussion above applies with $\bar{u}-u$ in place of $u-\iota_-(\textbf{a})$ and \eqref{eq2.60} gives:
	\begin{equation}
		\|\mathscr{S}(\bar{u};\mathbf{a})-\mathscr{S}(u;\mathbf{a})\|_*\leq
		C(\|\bar{u}\|_*+\|u\|_*)\|\bar{u}-u\|_*.
	\end{equation}
	
	Consider the subset $X:=\{u\in  \mathscr{C}[\mathbf{a}]:\|u-\iota_-(\mathbf{a})\|_*\leq \mu |\mathbf{a}|^2\}$. There exists $\mu_0$ such that, for all $\mu\geq \mu_0$, there exists $\varepsilon$ such that $\mathbf{a}\in B_{\varepsilon}(0)\subset  \mathbb{R}^I$ and $u\in X$ imply $\mathscr{S}(u;\mathbf{a})\in X$. Thus, $\mathscr{S}(\cdot;\mathbf{a})$ maps $X$ into itself. It is a contraction mapping. By the completeness of $X$, there exists a unique fixed point of $\mathscr{S}(\cdot;\mathbf{a}) $ in $X$, which we denote $\mathscr{S}(\mathbf{a})$. The smoothness on $\mathscr{S}(\mathbf{a})$ follows by bootstrapping standard parabolic Schauder estimates.
\end{proof}

\end{document}